\newtheorem{theorem}{Theorem}[section]
\newtheorem{prop}[theorem]{Proposition}
\newtheorem{lemma}[theorem]{Lemma}
\def\R{{\mathbb R}} 
\def\Z{{\mathbb Z}} 
\title{Generating New Partition Identities via a  Generalized Continued Fraction Algorithm}
\author{Wael Baalbaki and Thomas Garrity\\
Department of Mathematics\\
Williams College\\
Williamstown, MA 01267\\
tgarrity@williams.edu}
\begin{document}
\maketitle

\begin{abstract} Using the slow triangle map  (a type of multi-dimensional continued fraction algorithm), we exhibit a method for generating any number of new identities for subsets of integer partitions.
\end{abstract}

\section{Introduction}

Andrew and Eriksson's introduction to integer partitions \cite{Andrews-Eriksson} starts with discussing Euler's identity, ``{\it Every number has as many integer partitions into odd parts as into distinct parts}.''  As they state, this is quite surprising if you have never seen it before.  There are, though, many other equally if not more surprising partition identities.  For all there are two basic questions.  First, how to even guess the existence of any potential partition identities.  Then, once a possible potential identity is conjectured, how to prove it.

In \cite{BDGS}, Bonanno, Del Vigna, Isola and the second author developed a link between traditional continued fractions and the slow triangle map (a type of multi-dimensional continued fraction algorithm) with integer partitions of numbers into two or three distinct parts, with multiplicity.  This map was initially introduced for number theoretic reasons and has over the years exhibited interesting dynamical properties. We will see that the slow triangle map, when extended to higher dimensions, will provide a natural map $T$ from the set of integer partitions to itself.  Further, we will split the set of integer partitions into three disjoint subsets, which we will denote by $\triangle_0, \triangle_D$ and $\triangle_1$.  We will see that the triangle map $T$ will be one-to-one on $\triangle_0$  and $\triangle_1$.  Thus if we have any subset $S$ of  $\triangle_0 \cup \triangle_1$, then its image $T(S)$ will have to have the same size.  This will be used to generate many new partition identities, which appear in section four.  In section six, the corresponding generating functions are listed.

We would like to thank C. Bonanno and L. Pedersen for useful comments.

  \section{Partitions}
  
  For a general background on partition numbers, see Andrews \cite{Andrews}.  Given a positive integer $n$, the partition number $p(n)$ is the number of ways of writing $n$ as the sum of less than or equal t positive integers (ordering not mattering).  Thus $p(4)=5$ since we can write $4$ as
  $$4=3+1=2+2=2+1+1=1+1+1+1,$$
  while $p(7)= 15,$ since we can write $7$ as 
  
   $$\begin{array}{cccc}
 7& 6+1 & 5+2& 5+1+1 \\
 4+3&4+2+1& 4+1+1+1 &3+3+1  \\
3+2+2 &  3+2+1+1 &3+1+1+1+1&2+2+2+1\\
  2+2+1+1+1 & 2+1+1+1+1+1&1+1+1+1+1+1+1.
\end{array}$$
   
  We can write a given partition of $n$ as a descending sequence of positive integers
  $(\lambda_1, \ldots, \lambda_m)$ where $\lambda_\geq \lambda_2 \geq \cdots \geq \lambda_m>0$ with $n=\sum_{i=1}^m \lambda_i.$

  It is common to concatenate those $\lambda_1$ that are equal, and write the number of times a given $\lambda_i$ appears as an exponent.  Thus the above partitions of $7$ would be written as 
  
  $$\begin{array}{cccc}
  (7) & (6,1) & (5,2) & (5,1^2)  \\
   (4,3)& (4,2,1) & (4, 1^3) &(3^2,1)  \\
  (3,2^2) &  (3,2,1^2)&(3, 1^4)&(2^3,1)\\
   (2^2,1^3) & (2, 1^5)&(1^7).
\end{array}$$
Then a partition of $n$ would be some $(\lambda_1^{k_1}, \lambda_2^{k_2}, \ldots, \lambda_m^{k_m})$ where 
$\lambda_1>\lambda_2 >\cdots > \lambda_m>0$, each $k_i>0$ and $n= \sum_{i=1}^m k_i \lambda_i.$
  We call the $\lambda_i$ the {\it parts} of the partition and the $k_i$ the {\it multiplicities} of the partition.   We will always use $\lambda_i$ to denote parts and $k_i$ to denote  multiplicities. 
  
  For our purposes, we will write partitions as follows:  a partition $\lambda$ of a positive integer $n$ is of the form
  \begin{eqnarray*}
  \lambda &=& (\overline{\lambda}) \times [ \overline{k}] \\
  &=& (\lambda_1, \ldots, \lambda_m) \times [k_1, \ldots, k_m]
  \end{eqnarray*}
  where $\lambda_1>\lambda_2 >\cdots > \lambda_m>0$, each $k_i>0$ and $n= \sum_{i=1}^m k_i \lambda_i$ is the dot product of the vector of parts  $\overline{\lambda}$  with the vector of multiplicities $\overline{k}$.
  
In this notation, the partitions of  $7$ are
  
  $$\begin{array}{ccccc}
  (7)\times [1] & (6,1)\times [1,1] & (5,2)\times [1,1]& (5,1) \times [1,2]\\
 (4,3)\times [1,1] &(4,2,1)\times [1,1,1]& (4, 1) \times [1,3]&(3,1) \times[2,1]\\
   (3,2) \times [1,2]&  (3,2,1)\times [1,1,2] &(3, 1)\times [1,4]&(2,1) \times [3,1]\\
   (2,1)\times [2,3]& (2, 1)\times [1,5]&(1)\times [7].
\end{array}$$

Let $\mathcal{P}$ denote the set of all possible partitions 
$\lambda =(\overline{\lambda})\times [\overline{k}]$.  If $\lambda$ is a partition of the natural number $n$, meaning that 
$n= \sum_{i=1}^m k_i \lambda_i,$ we say that $\lambda $ has {\it size} $n$, and write it as 
$$\lambda \vdash n \; \mbox{and} \; |\lambda| =  \sum_{i=1}^m k_i \lambda_i = n.$$
If $\lambda =  (\lambda_1, \ldots, \lambda_m) \times [k_1, \ldots, k_m]$, we say that the {\it dimension} of $\lambda$ is $m$.  We denote the dimension as 
$\mbox{dim}(\lambda)$ or as  $\mbox{dim}((\overline{\lambda})\times [\overline{k}])$ The dimension of a partition will be quite important for this paper, unlike in most other work on partition numbers.   It is important enough that we denote 
$$\mathcal{P}_{m} = \{ (\overline{\lambda})\times [\overline{k}] \in \mathcal{P}: \mbox{dim}((\overline{\lambda})\times [\overline{k}])=m\}$$
and
$$\mathcal{P}_{\geq m} = \{ (\overline{\lambda})\times [\overline{k}] \in \mathcal{P}: \mbox{dim}((\overline{\lambda})\times [\overline{k}])\geq m\}.$$

Given a subset $S$ of $\mathcal{P}$, we define
$$p_S(n) = \#\{ \lambda \in S: \lambda\vdash n\}.$$

As an example of this rhetoric, Euler's identity is 
\begin{theorem} Let 
$$\mathcal{O} = \{ (\lambda_1, \ldots, \lambda_m) \times [k_1, \ldots, k_m]\in \mathcal{P}: \mbox{for all}\; i, \lambda_i \; \mbox{odd}\}$$
and 
$$\mathcal{D} = \{ (\lambda_1, \ldots, \lambda_m) \times [k_1, \ldots, k_m]\in \mathcal{P}: \forall i, k_i =1\}.$$
Then for all $n$,
$$p_{\mathcal{O}}(n) = p_{\mathcal{D}}(n) .$$
\end{theorem}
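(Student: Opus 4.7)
The plan is to prove the identity bijectively via Glaisher's map. The underlying arithmetic fact is that every positive integer has a unique representation as $2^a \lambda$ with $\lambda$ odd and $a \geq 0$. I will construct a size-preserving bijection $\Phi \colon \mathcal{O} \to \mathcal{D}$, from which $p_{\mathcal{O}}(n) = p_{\mathcal{D}}(n)$ for every $n$ follows by restricting $\Phi$ to partitions of that fixed $n$.

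To define $\Phi$, take $\mu = (\lambda_1,\ldots,\lambda_m) \times [k_1,\ldots,k_m] \in \mathcal{O}$, so every $\lambda_i$ is odd. For each $i$, expand the multiplicity in binary, $k_i = 2^{a_{i,1}} + 2^{a_{i,2}} + \cdots + 2^{a_{i,r_i}}$, with distinct nonnegative exponents $a_{i,j}$. Replace the $k_i$ copies of $\lambda_i$ by the single parts $2^{a_{i,1}}\lambda_i,\ldots,2^{a_{i,r_i}}\lambda_i$. Collecting over all $i$ produces a multiset of positive integers; uniqueness of the $2^a \cdot (\text{odd})$ decomposition guarantees these integers are pairwise distinct, so after sorting in decreasing order we obtain a partition in $\mathcal{D}$. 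Size is preserved block by block, since $k_i \lambda_i = (2^{a_{i,1}} + \cdots + 2^{a_{i,r_i}})\lambda_i$, and summing over $i$ gives $|\Phi(\mu)| = |\mu|$.

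For the inverse, take $\nu \in \mathcal{D}$ and write each of its distinct parts uniquely as $2^{a}\lambda$ with $\lambda$ odd. Group the parts of $\nu$ by their odd factor: for each odd $\lambda$ that appears, the exponents $a$ associated to it are automatically distinct, so $k_\lambda := \sum 2^{a}$ is a positive integer whose binary expansion recovers exactly those exponents. Declaring each such $\lambda$ to occur with multiplicity $k_\lambda$ yields a partition in $\mathcal{O}$ of the same size as $\nu$, and the two constructions are manifestly mutually inverse. The main obstacle is essentially bookkeeping: one must translate carefully between the $(\overline{\lambda}) \times [\overline{k}]$ notation used in the paper and the underlying multiset of parts, and verify that the parts produced in each direction really are strictly decreasing after sorting. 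The combinatorial content is nothing more than the uniqueness of the $2^a \cdot (\text{odd})$ decomposition of positive integers, which is elementary.
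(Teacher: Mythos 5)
Your Glaisher bijection is correct: writing each multiplicity $k_i$ in binary and replacing the $k_i$ copies of the odd part $\lambda_i$ by the distinct parts $2^{a_{i,j}}\lambda_i$ is size-preserving, lands in $\mathcal{D}$ by uniqueness of the $2^a\cdot(\text{odd})$ factorization, and is inverted exactly as you describe by grouping distinct parts according to their odd factors. However, there is nothing in the paper to compare it against: the paper states this theorem only as Euler's classical identity, quoted as an illustration of the $(\overline{\lambda})\times[\overline{k}]$ notation, and never proves it. The machinery the paper does develop (the triangle map $T$ and the splitting of $\mathcal{P}_{\geq 2}$ into $\triangle_0\cup\triangle_1\cup\triangle_D$) is later applied to $\mathcal{D}$ and $\mathcal{O}$ separately, producing the decompositions $p_{\mathcal{D}}(n)=1+p_{\mathcal{E}_0}(n)+p_{\mathcal{E}_1}(n)+p_{\mathcal{E}_D}(n)+\lfloor(3/n)\lfloor 3/n\rfloor\rfloor$ and $p_{\mathcal{O}}(n)=(\mbox{number of odd factors of }n)+p_{\mathcal{F}_0}(n)+p_{\mathcal{F}_1}(n)$, but the final theorem equating these two chains still invokes Euler's identity as an external input rather than deriving it. So your proof supplies something the paper deliberately leaves to the literature; it is the standard argument, self-contained and elementary, and it is independent of the triangle-map technology that is the paper's actual contribution.
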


  \subsection{The slow triangle map}\label{triangle map}
  This section is an exposition on the slow triangle map, a type of multidimensional continued fraction. The fast version  originally appeared in \cite{Garrity1, GarrityT05}, where the concern was the number-theoretic  Hermite problem.    Messaoudi,  Nogueira, and  Schweiger \cite{SchweigerF08} showed that the fast map is ergodic.  Further dynamical properties were discovered by  Berth\'e, Steiner and Thuswaldner \cite{Berthe-Steiner-Thuswaldner}  and by Fougeron and  Skripchenko \cite{Fougeron-Skripchenko}.   Bonanno, Del Vigna and  Munday \cite {Bonanno- Del Vigna-Munday} and Bonanno and Del Vigna \cite{Bonanno-Del Vigna} recently used the $\R^3$ slow  triangle map to develop a tree structure of rational pairs in the plane.   In a recent preprint, Ito \cite{Ito} showed that the fast  map is self-dual (in section three of that paper).  These papers are all primarily motivated by questions from dynamics.  For general background in multidimensional continued fraction algorithms, see Karpenkov \cite{Karpenkov13} and Schweiger \cite{Schweiger4} .  
  
  Here we will simply give the basic definitions for the $m$ dimensional triangle map.  Let $m\geq 2$.
  Start with the cone 
  $$\triangle  =\{(x_1, x_2, \ldots , x_m) \in \R^m: x_1>x_2 >\cdots > x_m >0\}$$
  with the two subcones
  
  \begin{eqnarray*}
  \triangle_0  &=& \{(x_1, x_2, \ldots , x_m) \in \triangle: x_1<x_2 + x_m \}\\
   \triangle_1  &=& \{(x_1, x_2, \ldots , x_m) \in \triangle: x_1>x_2 + x_m \}\\
  \end{eqnarray*}
  and the lower dimensional cone 
  $$\triangle_D  = \{(x_1, x_2, \ldots , x_m) \in \triangle: x_1=x_2 + x_m \}.$$
  The subscript $D$ is for ``diagonal.'' 
  
  When $m=2$,  the $x_2+x_m$ term is $2x_2.$

  The slow  triangle map $T$  is a map
  $$T:\triangle_0\cup \triangle_1 \rightarrow \triangle$$
  defined as 
  
  \begin{eqnarray*}
  T(x_1, \ldots , x_m) &=& \left\{ \begin{array}{cc} T_0(x_1, \ldots , x_m) & \mbox{if} \; x_1<x_2 + x_m \\
   T_1(x_1, \ldots , x_m) & \mbox{if} \; x_1>x_2 + x_n \end{array} \right.\\
   &=& \left\{ \begin{array}{cc} (x_2, x_3, \ldots, x_m, x_1-x_2) & \mbox{if} \; x_1<x_2 + x_n \\
   (x_1-x_m, x_2, \dots , x_m) & \mbox{if} \; x_1>x_2 + x_n \end{array} \right.
\end{eqnarray*}
Reflecting its roots in dynamical systems, the map is often written as a map $T:\triangle \rightarrow \triangle$, with the convention that any 
$(x_1, \ldots, x_m) \in \triangle_D$ is ignored, as $ \triangle_D$ is a set of measure zero.
  
  As discussed in \cite{BDGS, Garrity1}, the $\R^2$ triangle map acting on some $(x_1, x_2)$ can be interpreted as a means of finding the continued fraction expansion of the ratio $x_2/x_1$.  (This connection is not obvious if you have not seen it before.)  Also, for the expert,  the fast version is just the slow version when one concatenates the $T_1$ and then applies $T_)$.  In the $\R^2$ case, the fast version is the Gauss map while the slow version is the Farey map.

  We will throughout be concentrating on the slow version.  Thus for the rest of the paper,  the ``slow triangle map'' will simply be called the triangle map.

  \section{Linking the triangle map to partitions}
  \subsection{The basic link}
  We now link the triangle map to a map from the set of integer partitions of dimension at least two to the set of partitions.  Thus we will define a map
  $$T:\mathcal{P}_{\geq 2} \rightarrow \mathcal{P}.$$
  Recall that we are denoting the set of all partition as $\mathcal{P} $ and write each partition as 
  $\lambda = (\overline{\lambda}) \times [ \overline{k}] = (\lambda_1, \ldots, \lambda_m) \times [k_1, \ldots, k_m] $
  where $\lambda_1>\lambda_2 >\cdots > \lambda_m>0$ and  each $k_i>0.$ 
  
  By a slight abuse of notation, we set 
   \begin{eqnarray*}
  \triangle_0  &=& \{(\overline{\lambda}) \times [ \overline{k}]  \in \mathcal{P}_{\geq 2}: \lambda_1<  \lambda_2 + \lambda_m \}\\
   \triangle_1  &=& \{ (\overline{\lambda}) \times [ \overline{k}]  \in\mathcal{P}_{\geq 2}: \lambda_1>  \lambda_2 + \lambda_m \}\\
 \triangle_D  &=& \{   (\overline{\lambda}) \times [ \overline{k}]  \in\mathcal{P}_{\geq 2}: \lambda_1=  \lambda_2 + \lambda_m \}
 \end{eqnarray*}
 (As before, when $m=2$,  then  $ \lambda_2 + \lambda_m$ is $2 \lambda_2.$)
 Now to define 
 $T:\mathcal{P}_{\geq 2}\rightarrow \mathcal{P}.$
 This map, this almost internal symmetry,  will allow us in the next section to find many new partition identities.
  
  Define
  
  \begin{eqnarray*}
  T((\overline{\lambda}) \times [ \overline{k}]  ) &=& \left\{ \begin{array}{cc} T_0((\overline{\lambda}) \times [ \overline{k}]  ) & \mbox{if} \; (\overline{\lambda}) \times [ \overline{k}]  \in  \triangle_0 \\
   T_1((\overline{\lambda}) \times [ \overline{k}]  ) & \mbox{if} \; (\overline{\lambda}) \times [ \overline{k}]  \in  \triangle_1    \\
   T_D((\overline{\lambda}) \times [ \overline{k}]  ) & \mbox{if} \; (\overline{\lambda}) \times [ \overline{k}]  \in  \triangle_D\end{array} \right.\\
   &=& \left\{ \begin{array}{cc} (\lambda_2, \ldots, \lambda_m, \lambda_1-\lambda_2) \times [k_1+k_2, k_3, \ldots , k_m, k_1]& \mbox{if} \; (\overline{\lambda}) \times [ \overline{k}]  \in  \triangle_0 \\
   (\lambda_1-\lambda_m, \lambda_2, \ldots, \lambda_m) \times [k_1,k_2, \ldots , k_{m-1}, k_1 + k_m]& \mbox{if} \; (\overline{\lambda}) \times [ \overline{k}]  \in  \triangle_1\\ 
    ( \lambda_2, \ldots, \lambda_m) \times [k_1+k_2,k_2, \ldots , k_{m-1}, k_1 + k_m]& \mbox{if} \; (\overline{\lambda}) \times [ \overline{k}]  \in  \triangle_D\cap \mathcal{P}_{\geq 3} \\
    (\lambda_2)\times [2k_1+k_2]  & \mbox{if} \; \begin{array}{c} (\overline{\lambda}) \times [ \overline{k}] \; \mbox{is} \\
     (2\lambda_2, \lambda_2) \times [k_1, k_2]\in  \triangle_D\end{array} \end{array} \right.
\end{eqnarray*}
  For example,
  
  \begin{eqnarray*}
  T((6, 5, 4,2) \times[k_1,k_2, k_3, k_4] &=& ( 5,4,2, 6-5)\times [k_1+k_2, k_3, k_4, k_1] \\
  &=& ( 5,4,2, 1)\times [k_1+k_2, k_3, k_4, k_1] \\
   T((9, 5, 4,2) \times[k_1,k_2, k_3, k_4] &=& ( 9-2,5, 4,2)\times [k_1,k_2, k_3, k_1+k_4] \\
  &=& (7, 5,4,2)\times [k_1,k_2, k_3, k_1+k_4] \\
T((6, 5, 4,1) \times[k_1,k_2, k_3, k_4] &=& ( 5,4,1 )\times [k_1+k_2, k_3,  k_1+ k_4] \\
T(6, 3) \times [k_1, k_2] &=& (3) \times [2k_1+k_2].
  \end{eqnarray*}
  
A straightforward calculation gives us the key to this paper:
  
  \begin{theorem} For any partition $(\overline{\lambda}) \times [ \overline{k}]  $  of dimension at least two we have 
  $$(\overline{\lambda}) \times [ \overline{k}]   \vdash n \;\mbox{if and only if}\; \;T((\overline{\lambda}) \times [ \overline{k}]  ) \vdash n.$$
  \end{theorem}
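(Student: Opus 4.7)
The plan is to prove this by direct verification on each of the four branches in the definition of $T$, using the fact that the size of a partition $(\overline{\lambda})\times[\overline{k}]$ is simply the dot product $\sum_{i=1}^m k_i\lambda_i$. Since $T$ is a piecewise map, it suffices to check that each branch preserves this dot product; no induction or cleverness is needed.

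First I would treat the $\triangle_0$ case. Here $T_0$ sends $(\lambda_1,\ldots,\lambda_m)\times[k_1,\ldots,k_m]$ to $(\lambda_2,\ldots,\lambda_m,\lambda_1-\lambda_2)\times[k_1+k_2,k_3,\ldots,k_m,k_1]$. Expanding the new dot product gives $(k_1+k_2)\lambda_2+\sum_{i=3}^m k_i\lambda_i+k_1(\lambda_1-\lambda_2)$, and the $\pm k_1\lambda_2$ terms cancel, leaving $\sum_{i=1}^m k_i\lambda_i=n$. The $\triangle_1$ case is symmetric: the new dot product is $k_1(\lambda_1-\lambda_m)+\sum_{i=2}^{m-1} k_i\lambda_i+(k_1+k_m)\lambda_m$, and the $\pm k_1\lambda_m$ terms cancel.

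Next I would handle the two sub-cases of $\triangle_D$. For $\triangle_D\cap\mathcal{P}_{\geq 3}$, the image has parts $(\lambda_2,\ldots,\lambda_m)$ with multiplicities $[k_1+k_2,k_3,\ldots,k_{m-1},k_1+k_m]$, so the new dot product is $k_1(\lambda_2+\lambda_m)+\sum_{i=2}^m k_i\lambda_i$. Here I would invoke the defining relation $\lambda_1=\lambda_2+\lambda_m$ of $\triangle_D$ to rewrite this as $k_1\lambda_1+\sum_{i=2}^m k_i\lambda_i=n$. The dimension-two sub-case $(2\lambda_2,\lambda_2)\times[k_1,k_2]\mapsto(\lambda_2)\times[2k_1+k_2]$ is a trivial special instance: $(2k_1+k_2)\lambda_2=k_1(2\lambda_2)+k_2\lambda_2=n$.

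I do not anticipate any real obstacle; the only mild subtlety is bookkeeping in the $\triangle_D$ branch, where the dimension of the partition drops by one, so one must be careful that the multiplicity vector in the image has length $m-1$ rather than $m$ and that the $\triangle_D$ relation is invoked at exactly the right step. The dimension-two sub-case needs its own line because the formula in the general $\triangle_D$ case would otherwise absorb $\lambda_2$ and $\lambda_m=\lambda_2$ into the same part, which is handled separately in the definition of $T_D$.
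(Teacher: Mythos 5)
Your proof is correct and is precisely the ``straightforward calculation'' the paper alludes to without writing out: a branch-by-branch verification that each of $T_0$, $T_1$, and the two cases of $T_D$ preserves the dot product $\sum_i k_i\lambda_i$, invoking $\lambda_1=\lambda_2+\lambda_m$ exactly where needed in the diagonal case. Nothing further is required.
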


We can see 

\begin{prop} For all   $\lambda \in \triangle_0$ and for all $\lambda \in \triangle_1,$ we have 
$$\mbox{dim}(\lambda) = \mbox{dim}(T(\lambda)).$$
For all $\lambda \in \triangle_D$, the dimension under the map $T$ is reduced by one:
$$\mbox{dim}(\lambda) = \mbox{dim}(T(\lambda))+ 1.$$
  
\end{prop}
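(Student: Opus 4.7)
The proof is a direct case analysis, in each case verifying that the output of $T$ is a partition written in canonical form (strictly decreasing positive parts) and counting the number of distinct parts. The only thing that could go wrong is that the new tuple fails to be strictly decreasing, or that two parts coincide and must be merged, changing the dimension. The defining inequalities of $\triangle_0, \triangle_1, \triangle_D$ are exactly what is needed to control this.

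For $\lambda \in \triangle_0$, the image $T_0(\lambda)$ has parts $\lambda_2 > \lambda_3 > \cdots > \lambda_m$ inherited from $\lambda$, together with the new tail part $\lambda_1-\lambda_2$. This new part is positive since $\lambda_1 > \lambda_2$, and strictly less than $\lambda_m$ by the defining inequality $\lambda_1 < \lambda_2 + \lambda_m$ of $\triangle_0$. Hence all $m$ parts are distinct and in decreasing order, so $\dim T(\lambda) = m$. For $\lambda \in \triangle_1$, the image $T_1(\lambda)$ has parts $\lambda_1-\lambda_m, \lambda_2, \ldots, \lambda_m$. The new leading part is strictly greater than $\lambda_2$ because $\triangle_1$ is defined by $\lambda_1 > \lambda_2 + \lambda_m$. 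Again all $m$ parts are distinct and in decreasing order, so $\dim T(\lambda) = m$.

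For $\lambda \in \triangle_D$, the defining equality $\lambda_1 = \lambda_2 + \lambda_m$ forces $\lambda_1-\lambda_m = \lambda_2$, so a naive application of $T_1$ would produce a tuple with two equal leading entries. The $T_D$ rule is precisely the result of merging these into a single part with multiplicity $k_1 + k_2$. When $m \geq 3$ the resulting tuple $(\lambda_2, \ldots, \lambda_m)$ still has $m-1$ distinct parts in strictly decreasing order, so the dimension drops by exactly one. When $m=2$ the equality reads $\lambda_1 = 2\lambda_2$ and the merge collapses everything to the single part $\lambda_2$ with multiplicity $2k_1 + k_2$, again a drop of exactly one in dimension. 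There is no genuine obstacle: the proof is essentially the observation that the strict inequalities cutting out $\triangle_0$ and $\triangle_1$ are the exact conditions that preserve the count of distinct parts, while the boundary set $\triangle_D$ is exactly where one coincidence is forced and the dimension drops by one.
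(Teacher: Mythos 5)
Your proof is correct, and it is essentially the argument the paper has in mind: the paper states this proposition without proof (``We can see''), and its surrounding discussion of $T_D$ as the concatenation forced when $\lambda_1=\lambda_2+\lambda_m$ makes exactly your point that the strict inequalities defining $\triangle_0$ and $\triangle_1$ keep the image parts strictly decreasing while the equality on $\triangle_D$ forces one merge. Nothing is missing.
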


  The three subsets $\triangle_0, \triangle_1$ and $\triangle_D$ are all defined in terms of the parts $\lambda_i$ of the partition.  There are also natural subsets of $\mathcal{P}_{\geq 2}$ defined in terms of the multiplicities $k_i$, such as 
  
  \begin{eqnarray*}
M_0  &=& \{(\overline{\lambda}) \times [ \overline{k}]  \in \mathcal{P}: k_1>k_m \}\\
M_1  &=& \{ (\overline{\lambda}) \times [ \overline{k}]  \in \mathcal{P}:k_1< k_m \}\\
 \end{eqnarray*}
 
These sets correspond to $\triangle_0 $ and $\triangle_1$, as seen in:

\begin{theorem}\label{T0 and T1 1-1} The map
$$T_0:\triangle_0 \rightarrow M_0$$
and the map 
$$T_1:\triangle_1 \rightarrow M_1$$
are each one-to-one and onto maps.
\end{theorem}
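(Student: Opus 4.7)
The plan is to prove both statements at once by exhibiting explicit inverses $U_0\colon M_0 \to \triangle_0$ and $U_1\colon M_1 \to \triangle_1$. Injectivity and surjectivity then both drop out automatically. The work splits into four steps: (i) confirm $T_i(\triangle_i) \subseteq M_i$ so the maps have the claimed codomain; (ii) write down candidate inverses and check that their outputs are legitimate partitions (strictly decreasing parts with positive multiplicities); (iii) check those outputs actually lie back in $\triangle_0$ respectively $\triangle_1$; and (iv) verify the compositions are the identity.

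For step (i), I would simply read the first and last multiplicities off the formula for $T_0((\overline{\lambda}) \times [\overline{k}])$, namely $k_1+k_2$ and $k_1$; since $k_2 \geq 1$, we get $k_1 + k_2 > k_1$, which is the defining inequality for $M_0$. The analogous computation for $T_1$ yields first and last multiplicities $k_1$ and $k_1+k_m$, placing the image in $M_1$.

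For steps (ii) and (iii), the candidate inverses are, for $(\overline{\mu}) \times [\overline{j}] \in M_0$,
$$U_0((\overline{\mu}) \times [\overline{j}]) = (\mu_1 + \mu_m,\, \mu_1,\, \mu_2,\, \ldots,\, \mu_{m-1}) \times [j_m,\, j_1 - j_m,\, j_2,\, \ldots,\, j_{m-1}],$$
and for $(\overline{\mu}) \times [\overline{j}] \in M_1$,
$$U_1((\overline{\mu}) \times [\overline{j}]) = (\mu_1 + \mu_m,\, \mu_2,\, \ldots,\, \mu_m) \times [j_1,\, j_2,\, \ldots,\, j_{m-1},\, j_m - j_1].$$
That $U_0$ outputs a valid partition uses $\mu_1 + \mu_m > \mu_1 > \mu_2 > \cdots > \mu_{m-1} > 0$ together with $j_1 - j_m > 0$, which is exactly the $M_0$ condition. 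Checking that the image lies in $\triangle_0$ reduces the required inequality $\lambda_1 < \lambda_2 + \lambda_m$ to $\mu_m < \mu_{m-1}$, which holds. The parallel checks for $U_1$ similarly reduce the $\triangle_1$ inequality to $\mu_1 > \mu_2$, again automatic.

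The only real obstacle is bookkeeping: the four maps $T_0, T_1, U_0, U_1$ shift indices in slightly different directions, so verifying $T_0 \circ U_0 = \mathrm{id}$, $U_0 \circ T_0 = \mathrm{id}$, and the analogues for $T_1, U_1$ must be done coordinate by coordinate. Particular care is needed in the boundary case $m = 2$, where $\lambda_m$ coincides with $\lambda_2$ and $\lambda_2 + \lambda_m$ collapses to $2\lambda_2$; here the $\triangle_0$ check for $U_0$ becomes $\mu_2 < \mu_1$ and the $\triangle_1$ check for $U_1$ becomes $\mu_1 > \mu_2$, both still immediate. No deeper combinatorial or dynamical input is required beyond these verifications.
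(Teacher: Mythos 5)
Your proposal is correct and takes essentially the same route as the paper: the paper also proves the theorem by writing down exactly these explicit inverses ($T_0^{-1}$ and $T_1^{-1}$, identical to your $U_0$ and $U_1$) and noting that the verifications rest on the parts being strictly decreasing and the multiplicities positive. Your write-up just spells out the codomain, well-definedness, and composition checks that the paper leaves implicit.
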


\begin{proof}This can be shown simply by explicitly finding the explicit inverses to $T_0$ and to $T_1$:

  \begin{eqnarray*}
  T^{-1}((\overline{\lambda}) \times [ \overline{k}]  ) &=& \left\{ \begin{array}{cc} T_0^{-1}((\overline{\lambda}) \times [ \overline{k}]  ) & \mbox{if} \; (\overline{\lambda}) \times [ \overline{k}]  \in  M_0 \\
   T_1^{-1}((\overline{\lambda}) \times [ \overline{k}]  ) & \mbox{if} \; (\overline{\lambda}) \times [ \overline{k}]  \in  M_1    \\
  \end{array} \right.\\
   &=& \left\{ \begin{array}{cc} (\lambda_1 +\lambda_m, \lambda_1\ldots, \lambda_{m-1}) \times [k_m, k_1-k_m,k_2 \ldots , k_{m-1}]& \mbox{if} \; (\overline{\lambda}) \times [ \overline{k}]  \in  M_0 \\
   (\lambda_1+\lambda_m, \lambda_2, \ldots, \lambda_m) \times [k_1,k_2, \ldots , k_{m-1}, k_m - k_1]& \mbox{if} \; (\overline{\lambda}) \times [ \overline{k}]  \in  \triangle_1\\ 
    \end{array} \right.
\end{eqnarray*}
We are using throughout that the parts are strictly decreasing and that the multiplicities are all positive. 

\end{proof}
The map $T$ acts on both the parts $\lambda_i$ and on  the multiplicities $k_i$.  The way both $T_0$ and $T_1$ act on the parts comes 
 from the triangle map as described in Section \ref{triangle map}.  The action of each on the multiplicities is of course determined by the desire for the size of the partition to not be changed.  But once written down, it is apparent that these actions correspond to what is called in dynamical systems  ``the natural extension of a map.''  This is described in \cite{BDGS}.  For background on natural extensions, see 
Arnoux and Nogueira \cite{Arnoux-Nogueira-93}.

  \subsection{The curious case of the diagonal map $T_D$}
  The above map $T_D$ is a new map.  It is not natural if we are concerned only with the dynamics of the original triangle map.  In fact, 
in the study of the triangle map as a multidimensional continued fraction algorithm,  points on the diagonal (where $x_1=x_2+x_m$) are usually ignored, as these points form sets of measure zero.  (These ``boundary''  types points are also similarly ignored in most other multidimensional continued fraction algorithms.)  Here though we defined 
 $T_D$  on  the partitions $\lambda=  (\overline{\lambda}) \times [ \overline{k}]$ where $\lambda_1=\lambda_2+ \lambda_m$  for $m\geq 3$ and also for the quite special case of when 
 $\lambda=  (\overline{\lambda}) \times [ \overline{k}] = (2\lambda_2, \lambda_2) \times [k_1,k_2]$.   We want to see why our definition of $T_D$ is the ``right one.'' 
 
 Start with some $(\overline{\lambda}) \times [ \overline{k}]$ where $\lambda_1=\lambda_2+ \lambda_m,$  where $m\geq 3.$ In some  sense, we can think of this point as on both the boundary of  $\triangle_0$ (where $\lambda_1<\lambda_2+ \lambda_m$) and on  the boundary of  $\triangle_1$ (where $\lambda_1>\lambda_2+ \lambda_m$).  Let us start with thinking of $(\overline{\lambda}) \times [ \overline{k}]$  as actually being on $\triangle_0$ and act on it by $T_0$:
 \begin{eqnarray*}
 T_0((\overline{\lambda}) \times [ \overline{k}] )&=& (\lambda_2, \ldots, \lambda_m, \lambda_1-\lambda_2) \times [k_1+k_2, k_3, \ldots, k_m, k_1] \\
 &=& (\lambda_2, \ldots, \lambda_m, \lambda_m) \times [k_1+k_2, k_3, \ldots, k_m, k_1] .
 \end{eqnarray*}
   As the last two parts are equal, in the context of integer partitions it is natural to concatenate the last two parts, getting
   $$(\lambda_2, \ldots, \lambda_m) \times [k_1+k_2, k_3, \ldots, k_m+ k_1]  ,$$
   which is precisely how we defined the map $T_D.$

  But what if we initially think of $(\overline{\lambda}) \times [ \overline{k}]$  as  on $\triangle_1$ and  now act on it by $T_1$:
 \begin{eqnarray*}
 T_1((\overline{\lambda}) \times [ \overline{k}] )&=& (\lambda_1-\lambda_m ,\lambda_2, \ldots, \lambda_m) \times [k_1, k_2, , \ldots, k_{m-1}, k_1+k_m] \\
 &=& (\lambda_2, \lambda_2, \ldots, \lambda_m) \times [k_1, k_2, , \ldots, k_{m-1}, k_1+k_m]  .
 \end{eqnarray*}
Now the first two parts are equal.  When we concatenate these first two terms, we get 
  $$(\lambda_2, \ldots, \lambda_m) \times [k_1+k_2, k_3, \ldots, k_m+ k_1],$$
  which is once again $T_D((\overline{\lambda}) \times [ \overline{k}] .$
  
  And again, while  this concatenation is extremely natural for integer partitions, it is unnatural if one is concerned with the underlying dynamics of   the map, which is why the map $T_D$ has never been written down before.  
  
 As an example, consider 
 
 \begin{eqnarray*}
 T_0 ( 11, 8, 6, 3)  \times [k_1, k_2, k_3, k_4] &=& (8,6,3,3) \times [k_1+k_2, k_3, k_4,k_1]\\
 &\rightarrow & (8,6,3) \times [k_1+k_2, k_3, k_1+k_4]
 \end{eqnarray*}
and 
\begin{eqnarray*}
 T_1 ( 11, 8, 6, 3)  \times [k_1, k_2, k_3, k_4] &=& (8,8, 6,3) \times [k_1, k_2, k_3,k_1+ k_4]\\
 &\rightarrow & (8,6,3) \times [k_1+k_2, k_3, k_1+k_4]
 \end{eqnarray*}
  where in both cases $\rightarrow$ means to concatenate.  
  
  Unlike $T_0$ and $T_1$, this map $T_D$ is not one-to-one, as seen with 
  
  \begin{eqnarray*}
  T_D( (11, 8,6, 3) \times [2,3,4,5] &=& (8,6,3) \times [5, 3, 9]\\
  &=& T_D( (11, 8,6, 3) \times [1,4,4,6] 
  \end{eqnarray*}
  But as seen with this example, we do have the following:
  \begin{prop}\label{onetoone} Let $(\lambda_1, \ldots, \lambda_m) \times [k_1, \ldots, k_m]$ and $(\mu_1, \ldots, \mu_m) \times [l_1, \ldots, l_m]$ both be on the diagonal $\triangle_D.$
  If 
  $$T_D( (\lambda_1, \ldots, \lambda_m) \times [k_1, \ldots, k_m]) = T_D( (\mu_1, \ldots, \mu_m) \times [l_1, \ldots, l_m],$$
  then for all $i$,
  $$\lambda_i=\mu_i$$ and
  $$\lambda_1+\lambda_2= \mu_1+\mu_2, \; \lambda_1+\lambda_m= \mu_1+\mu_m.$$
  \end{prop}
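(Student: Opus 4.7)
The plan is to argue by direct inspection of the explicit image formula for $T_D$, exploiting the fact that a partition written in canonical form (strictly decreasing parts, positive multiplicities) has a \emph{unique} such representation. Concretely, applying $T_D$ to $(\lambda_1,\ldots,\lambda_m)\times [k_1,\ldots,k_m]$ yields the canonical partition $(\lambda_2,\ldots,\lambda_m)\times[k_1+k_2,\,k_3,\ldots,k_{m-1},\,k_1+k_m]$, and similarly for the $\mu$-partition. Setting the two images equal therefore produces an identity of canonical partitions, from which we can read off componentwise equalities.

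First I would extract the equalities on the part-vectors: $\lambda_j=\mu_j$ for $j=2,\ldots,m$. Next, from the multiplicity-vectors I would read off $k_1+k_2=l_1+l_2$, $k_j=l_j$ for $3\le j\le m-1$, and $k_1+k_m=l_1+l_m$; these are exactly the sum relations in the statement (where the claim as written about $\lambda_1+\lambda_2$ and $\lambda_1+\lambda_m$ is naturally interpreted as the analogous relation on the multiplicities $k_i,l_i$, since the $\lambda$-relation is tautological from the first step).

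Then I would close the gap at index $1$ by invoking the diagonal hypothesis: both partitions lie in $\triangle_D$, so $\lambda_1=\lambda_2+\lambda_m$ and $\mu_1=\mu_2+\mu_m$. Since I have already shown $\lambda_2=\mu_2$ and $\lambda_m=\mu_m$, I conclude $\lambda_1=\mu_1$, establishing $\lambda_i=\mu_i$ for all $i$. The monotonicity and positivity assumptions are used only to guarantee that both images are genuinely in canonical form (e.g.\ $k_1+k_2>0$ since both $k_1,k_2>0$, and $\lambda_2>\cdots>\lambda_m>0$ is inherited).

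A brief remark would handle the edge cases already flagged in the definition: for $m=3$ the "interior" list $k_3,\ldots,k_{m-1}$ is empty and no intermediate equalities are needed, while for $m=2$ the diagonal formula gives $(\lambda_2)\times[2k_1+k_2]$, so the image determines $\lambda_2$ and $2k_1+k_2$ directly, after which $\lambda_1=2\lambda_2=2\mu_2=\mu_1$ follows from the diagonal condition. There is no essential obstacle in this argument: everything reduces to uniqueness of canonical form together with one substitution from the diagonal equation, so the only care required is bookkeeping of indices at the two ends of the multiplicity vector, where the "$k_1$" contribution is glued onto both $k_2$ and $k_m$.
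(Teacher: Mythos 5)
Your proof is correct and follows the same direct-inspection route the paper intends: the paper in fact states this proposition without proof, immediately after the worked example, and the argument is exactly the one you give (read off $\lambda_2=\mu_2,\ldots,\lambda_m=\mu_m$ from uniqueness of canonical form, then recover $\lambda_1=\mu_1$ from the diagonal relation $\lambda_1=\lambda_2+\lambda_m$). Your reading of the displayed sums $\lambda_1+\lambda_2=\mu_1+\mu_2$ and $\lambda_1+\lambda_m=\mu_1+\mu_m$ as standing for the multiplicity relations $k_1+k_2=l_1+l_2$ and $k_1+k_m=l_1+l_m$ is the sensible one (as written they are tautological), and those are precisely the relations you extract from the multiplicity vector of the image.
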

  
  Thus $T_D$ is one-to-one when thought of as a map on the parts and not as a map including the multiplicities.
  
  Finally consider the case when our partition is of the form $\lambda=  (\overline{\lambda}) \times [ \overline{k}] = (2\lambda_2, \lambda_2) \times [k_1,k_2]$. 
  Thinking of $(2\lambda_2, \lambda_2) \times [k_1,k_2]$  as being on $\triangle_0$ and acting  on it by $T_0$ gives us 
 \begin{eqnarray*}
 T_0((2\lambda_2, \lambda_2) \times [k_1,k_2])&=& (\lambda_2, \lambda_2) \times [k_1+k_2,  k_1] \\
 &\rightarrow& (\lambda_2 \times [2k_1+k_2] ,
 \end{eqnarray*}
 where the $\rightarrow$ means to 
to concatenate.  Note this is how we defined $T_D$ on partitions with two parts.  

And as we would suspect, now thinking of $(2\lambda_2, \lambda_2) \times [k_1,k_2]$  as being on $\triangle_1$ and acting  on it by $T_1$ gives us

\begin{eqnarray*}
T_1((2\lambda_2, \lambda_2) \times [k_1,k_2])&=& (\lambda_2, \lambda_2) \times [k_1,  k_1+k_2] \\
 &\rightarrow& (\lambda_2 \times [2k_1+k_2] ,
 \end{eqnarray*}
 where the $\rightarrow$ still means to 
to concatenate, which gives us the same value of $T_D$.

Again, $T_D$ is natural if we want to use the triangle map to understand partitions of integers, though it is not natural if we are only interested in the underlying dynamics of the map.

  \section{New Partition Identities}\label{examples}
  We are now ready to start producing many new partition identities.

  \subsection{Applying Theorem \ref{T0 and T1 1-1}}
  First, we have 
  \begin{theorem} Every number has as many integer partitions into  partitions with $\lambda_1<\lambda_2+\lambda_m$ as into partitions with $k_1>k_m.$
  Similarly, every number has as many integer partitions into  partitions with $\lambda_1>\lambda_2+\lambda_m$ as into partitions with $k_1<k_m.$
  \end{theorem}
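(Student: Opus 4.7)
The plan is to observe that this theorem is essentially an immediate corollary of two results already established: the size-preservation theorem (stating that $\lambda \vdash n$ iff $T(\lambda) \vdash n$) and Theorem \ref{T0 and T1 1-1} (stating that $T_0 : \triangle_0 \to M_0$ and $T_1 : \triangle_1 \to M_1$ are bijections). All that is needed is to combine them and restrict to fibers of fixed size.

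First I would note that the sets $\triangle_0, \triangle_1, M_0, M_1$ all live inside $\mathcal{P}_{\geq 2}$: the conditions $\lambda_1 \lessgtr \lambda_2 + \lambda_m$ and $k_1 \lessgtr k_m$ are vacuous when $m = 1$ (and in fact the conditions $k_1 > k_m$ or $k_1 < k_m$ force $m \geq 2$). So everything is set up on the same playing field where both the size map $\lambda \mapsto |\lambda|$ and the triangle map $T$ are defined.

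Next I would invoke Theorem \ref{T0 and T1 1-1}, which gives a bijection $T_0 : \triangle_0 \to M_0$. By the size-preservation theorem, for every $\lambda \in \triangle_0$ we have $|T_0(\lambda)| = |\lambda|$. Hence $T_0$ restricts to a bijection
\[
\{\lambda \in \triangle_0 : \lambda \vdash n\} \;\longleftrightarrow\; \{\mu \in M_0 : \mu \vdash n\},
\]
which yields $p_{\triangle_0}(n) = p_{M_0}(n)$. The identical argument applied to $T_1 : \triangle_1 \to M_1$ gives $p_{\triangle_1}(n) = p_{M_1}(n)$, finishing both halves of the theorem.

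There is no real obstacle here; the statement is a direct corollary. If anything, the only thing worth double-checking is the boundary/dimension bookkeeping—namely that the $m \geq 2$ restriction is harmless because the defining inequalities themselves require $m \geq 2$—which I would dispatch in a single sentence.
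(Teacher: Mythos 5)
Your proposal is correct and matches the paper's treatment: the paper dispatches this theorem with the single remark that it ``is just a rewriting of Theorem \ref{T0 and T1 1-1},'' implicitly combining that bijection with the size-preservation theorem exactly as you do. You have merely spelled out the fiber-by-fiber restriction that the paper leaves tacit.
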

  
  This is just a rewriting of Theorem \ref{T0 and T1 1-1}.

    We can refine this theorem to be:

  \begin{theorem}\label{theorem 3.3 neo} Let $d$ be a positive integer.  Every number has as many integer partitions into  partitions with $\lambda_1+d = \lambda_2+\lambda_m$ as into partitions with $k_1>k_m$ and $\lambda_{m-1} =\lambda_m+d.$
  Similarly, every number has as many integer partitions into  partitions with $\lambda_1= \lambda_2+\lambda_m + d$ as into partitions with $k_1<k_m$ and $\lambda_1= \lambda_2+d.$
  \end{theorem}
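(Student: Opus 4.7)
The plan is to obtain Theorem~\ref{theorem 3.3 neo} by restricting the bijections $T_0 \colon \triangle_0 \to M_0$ and $T_1 \colon \triangle_1 \to M_1$ from Theorem~\ref{T0 and T1 1-1} to the specified sub-classes, and checking that the ``gap'' condition $\lambda_2+\lambda_m-\lambda_1 = d$ (respectively $\lambda_1-\lambda_2-\lambda_m = d$) on the source side transforms into the stated extra condition on the image. Size preservation is already given by the earlier theorem asserting $\lambda \vdash n \Leftrightarrow T(\lambda) \vdash n$, so once the bijections are set up on unordered partitions the identities $p_{S}(n) = p_{T(S)}(n)$ follow by restricting to partitions of size $n$.

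For the first assertion I would take $\lambda = (\lambda_1,\ldots,\lambda_m) \times [k_1,\ldots,k_m]$ with $\lambda_1 + d = \lambda_2 + \lambda_m$. Because $d > 0$, strictly $\lambda_1 < \lambda_2 + \lambda_m$, so $\lambda \in \triangle_0$, and $\lambda_1 - \lambda_2 = \lambda_m - d < \lambda_m$, so no concatenation occurs and the image has dimension $m$. By the explicit formula,
\[
T_0(\lambda) = (\lambda_2, \lambda_3, \ldots, \lambda_m, \lambda_m - d) \times [k_1+k_2, k_3, \ldots, k_m, k_1].
\]
The last two parts of the image differ by exactly $d$, and the first multiplicity $k_1+k_2$ strictly exceeds the last multiplicity $k_1$, so the image lies in the target class. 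Conversely, given $\mu \in M_0$ with $\mu_{m-1} = \mu_m + d$, the formula for $T_0^{-1}$ from the proof of Theorem~\ref{T0 and T1 1-1} yields
\[
T_0^{-1}(\mu) = (\mu_1 + \mu_m, \mu_1, \mu_2, \ldots, \mu_{m-1}) \times [l_m, l_1 - l_m, l_2, \ldots, l_{m-1}],
\]
whose first, second, and last parts satisfy $(\mu_1+\mu_m) + d = \mu_1 + (\mu_m + d) = \mu_1 + \mu_{m-1}$, i.e.\ exactly $\lambda_1 + d = \lambda_2 + \lambda_m$. Hence $T_0$ restricts to a bijection between the two sub-classes.

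For the second assertion the computation with $T_1$ is entirely parallel. If $\lambda \in \triangle_1$ satisfies $\lambda_1 = \lambda_2 + \lambda_m + d$, then $T_1$ replaces $\lambda_1$ by $\lambda_1 - \lambda_m = \lambda_2 + d$, so the first two parts of the image differ by $d$, while the first and last multiplicities become $k_1$ and $k_1 + k_m$, placing the image in $M_1$ with $\mu_1 = \mu_2 + d$. Reversing with $T_1^{-1}$ recovers the source condition by the same bookkeeping. The only real obstacle is keeping the indices straight and verifying that strict inequality $d > 0$ keeps us out of $\triangle_D$, so that dimensions are preserved and the restricted maps remain honest bijections; once this is noted, the theorem is immediate from Theorem~\ref{T0 and T1 1-1} together with size preservation.
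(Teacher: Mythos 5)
Your proposal is correct and follows essentially the same route as the paper: restrict $T_0$ (resp.\ $T_1$) to the subset of $\triangle_0$ (resp.\ $\triangle_1$) where the gap equals $d$, compute the image explicitly, and invoke the bijectivity from Theorem~\ref{T0 and T1 1-1}. Your explicit check of the converse direction via $T_0^{-1}$ and the remark that $d>0$ keeps the partition off $\triangle_D$ are slightly more detailed than the paper's ``a similar argument works,'' but the substance is identical.
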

  
  \begin{proof}
  Set 
  $$\triangle_0(d) = \{  (\lambda_1, \ldots, \lambda_m) \times [k_1, \ldots, k_m]\in \triangle_0: \lambda_1+d = \lambda_2+\lambda_m\}.$$
  Note that in this case, $\lambda_m=(\lambda_1-\lambda_2) + d.$
  We need to show that 
  $$T_0(\triangle_0(d)) =\{(\lambda_1, \ldots, \lambda_m) \times [k_1, \ldots, k_m]: k_1>k_m, \lambda_{m-1} = \lambda_m+d\}.$$
  
  We know that 
  \begin{eqnarray*}
  T_0( (\lambda_1, \ldots, \lambda_m) \times [k_1, \ldots, k_m] &=& (\lambda_2, \lambda_3, \ldots , \lambda_m ,\lambda_1-\lambda_2) \times [k_1+k_2 , k_3, \ldots, k_m.k_1]\\
  \end{eqnarray*}
  This means that the first multiplicity of $T_0( (\lambda_1, \ldots, \lambda_m) \times [k_1, \ldots, k_m] $ is strictly greater than the last multiplicity and that the last part is exactly $d$ less that the next to last part, which is precisely what we wanted to show.  
  
  A similar argument works for the second part of the theorem, but now setting 
  $$\triangle_1(d) = \{  (\lambda_1, \ldots, \lambda_m) \times [k_1, \ldots, k_m]\in \triangle_0: \lambda_1 = \lambda_2+\lambda_m+ d\}$$
  and then applying the one-to-one map $T_1.$

  \end{proof}
  
  \subsection{Images of Cylinders}
  We want to find interesting subsets with respect to the triangle map $T$.  It is standard in dynamics to look at the cylinder sets.  Thus the motivation for this section is coming from dynamical systems.

  Here is the idea.  The map $T_0$ is one-to-one on $\triangle_0$ and $T_1$  is one-to-one on $\triangle_1$.  Given any subset $S$ of either $\triangle_0$ or $\triangle_1$, we know that every number $n$ will have as many partitions coming from $S$ as from the appropriate $T(S).$  Thus if $S_0\subset \triangle_0$ and if $S_0\subset \triangle_1$, then for all positive integers $n$ we have 
  $$p_{S_0}(n) = p_{T_0(S_0)}(n), \; p_{S_1}(n) = p_{T_1(S_1)}(n) .$$
  
  We would like to be able to find some easily describable and natural subsets of $\triangle_0$  and $\triangle_1.$   From dynamical systems, the obvious choices would be {\it cylinder} sets.  
  
  The cylinder sets are defined as 

\begin{eqnarray*}
  \triangle_{00}  &=& \{(\overline{\lambda}) \times [ \overline{k}]  \in \triangle_0: T_0(  (\overline{\lambda}) \times [ \overline{k}]  )\in \triangle_0              \}\\
   \triangle_{01}  &=& \{(\overline{\lambda}) \times [ \overline{k}]  \in \triangle_0: T_0(  (\overline{\lambda}) \times [ \overline{k}]  )\in \triangle_1              \}\\
    \triangle_{10}  &=& \{(\overline{\lambda}) \times [ \overline{k}]  \in \triangle_1: T_1(  (\overline{\lambda}) \times [ \overline{k}]  )\in \triangle_0              \}\\ 
     \triangle_{11}  &=& \{(\overline{\lambda}) \times [ \overline{k}]  \in \triangle_1: T_1(  (\overline{\lambda}) \times [ \overline{k}]  )\in \triangle_1             \}\\ 
    \end{eqnarray*}
    
    By recursion, given a $d$-tuple $(i_1, i_2, \ldots, i_d)$ of zeros and ones, we define
     $$ \triangle_{(i_1, i_2, \ldots, i_d)}  = \{(\overline{\lambda}) \times [ \overline{k}]  \in \triangle_{(i_1}: T_{i_{1}}(  (\overline{\lambda}) \times [ \overline{k}]  )\in \triangle_{(i_2, i_3, \ldots, i_d)}.$$

  We are defining these cylinder sets in terms of the map $T$, but each can be defined more intrinsically, as follows:
      
    \begin{prop}
   \begin{eqnarray*}
   \triangle_{00} &=&  \{\overline{\lambda}) \times [ \overline{k}] \in \mathcal{P}_{\geq 3} :      \lambda_1<\lambda_2+\lambda_m, 2\lambda_2 <\lambda_1 + \lambda_3              \} \\
 && \cup  \;    \{ (\lambda_1, \lambda_2) \times [k_1,k_2]: \lambda_1 < 2\lambda_2, 3\lambda_2 < 2\lambda_1\}   \\
    \triangle_{01} &=&  \{\overline{\lambda}) \times [ \overline{k}]  :    \lambda_1<\lambda_2+\lambda_m, 2\lambda_2 >\lambda_1 + \lambda_3      \} \\
    && \cup\; \{ (\lambda_1, \lambda_2) \times [k_1,k_2]: \lambda_1 < 2\lambda_2, 3\lambda_2 >2 \lambda_1\}   \\
   \triangle_{10} &=&  \{\overline{\lambda}) \times [ \overline{k}]  :     \lambda_2+\lambda_m   <\lambda_1 < \lambda_2 + 2\lambda_m    \} \\
   && \cup \;\{ (\lambda_1, \lambda_2) \times [k_1,k_2]: \lambda_1 > 2\lambda_2, \lambda_1 <3 \lambda_2\}   \\
 \triangle_{11} &=&  \{\overline{\lambda}) \times [ \overline{k}]  :         \lambda_1>\lambda_2+2\lambda_m          \} \\
  && \cup \; \{ (\lambda_1, \lambda_2) \times [k_1,k_2]: \lambda_1 > 3\lambda_2\}   \\
   \end{eqnarray*}
   \end{prop}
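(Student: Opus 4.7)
The plan is to unpack each cylinder set definition and translate the two-step membership condition into inequalities on the parts alone. By definition, $(\overline{\lambda}) \times [\overline{k}]$ lies in $\triangle_{i_1 i_2}$ iff it lies in $\triangle_{i_1}$ and its image under $T_{i_1}$ lies in $\triangle_{i_2}$. Since the tests defining $\triangle_0$ and $\triangle_1$ depend only on $\lambda_1, \lambda_2, \lambda_m$ of the partition being tested, the multiplicities play no role, and the resulting characterization must be purely in terms of the parts, matching the shape of the proposition.

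First I would treat the four cases in dimension $m \geq 3$, where $T_0$ and $T_1$ preserve dimension and are easy to apply. The image $T_0((\overline{\lambda}) \times [\overline{k}])$ has first part $\lambda_2$, second part $\lambda_3$, and last part $\lambda_1 - \lambda_2$; imposing $\triangle_0$ on this image gives $\lambda_2 < \lambda_3 + (\lambda_1 - \lambda_2)$, which rearranges to $2\lambda_2 < \lambda_1 + \lambda_3$, while $\triangle_1$ flips the inequality. Combined with the $\triangle_0$ hypothesis $\lambda_1 < \lambda_2 + \lambda_m$, this produces the first set in each of $\triangle_{00}$ and $\triangle_{01}$. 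Similarly, $T_1((\overline{\lambda}) \times [\overline{k}])$ has first part $\lambda_1 - \lambda_m$, second $\lambda_2$, and last $\lambda_m$, so the $\triangle_0$/$\triangle_1$ test on this image reads $\lambda_1 - \lambda_m < \lambda_2 + \lambda_m$ or $\lambda_1 - \lambda_m > \lambda_2 + \lambda_m$, yielding the bounds $\lambda_2 + \lambda_m < \lambda_1 < \lambda_2 + 2\lambda_m$ and $\lambda_1 > \lambda_2 + 2\lambda_m$ for $\triangle_{10}$ and $\triangle_{11}$.

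Next I would handle the dimension-two case separately, since then $\lambda_m = \lambda_2$, there is no $\lambda_3$, and the $\triangle_0/\triangle_1$ dichotomy for a dim-two partition $(\mu_1, \mu_2)$ reduces to $\mu_1 < 2\mu_2$ or $\mu_1 > 2\mu_2$. For $T_0((\lambda_1, \lambda_2) \times [k_1, k_2]) = (\lambda_2, \lambda_1 - \lambda_2) \times [k_1+k_2, k_1]$, applying this test to the image yields $\lambda_2 < 2(\lambda_1 - \lambda_2)$ or its reverse, i.e., $3\lambda_2 < 2\lambda_1$ or $3\lambda_2 > 2\lambda_1$. Analogous substitutions for $T_1$ give $\lambda_1 < 3\lambda_2$ or $\lambda_1 > 3\lambda_2$. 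These are precisely the second set in each stated union.

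The only subtlety to verify carefully is that in each case the image of $T_{i_1}$ really is a valid partition (strictly decreasing, positive parts), so that the $\triangle_{i_2}$ test applies. For $T_0$, the required strict decrease $\lambda_m > \lambda_1 - \lambda_2 > 0$ follows from the strict $\triangle_0$ hypothesis together with $\lambda_1 > \lambda_2$, which holds automatically; for $T_1$, the required $\lambda_1 - \lambda_m > \lambda_2$ is exactly the strict $\triangle_1$ hypothesis. Once these are in hand, the proposition reduces in each of the eight sub-cases (four cylinders times two dimension regimes) to the straightforward rearrangement of a single linear inequality.
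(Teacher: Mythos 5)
Your proposal is correct and follows essentially the same route as the paper: in each of the four cases you unpack membership in $\triangle_{i_1}$, apply $T_{i_1}$ explicitly, impose the $\triangle_{i_2}$ condition on the image, and rearrange the resulting linear inequality, treating dimension two separately because $\lambda_m=\lambda_2$ and there is no $\lambda_3$. Your added remark that the image of $T_{i_1}$ is a genuine strictly decreasing partition (so the $\triangle_{i_2}$ test is meaningful) is a small point the paper leaves implicit, but otherwise the arguments coincide.
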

   
   \begin{proof}
   
   We describe each cylinder set in turn, starting with looking at $\triangle_{00}.$  Let $\lambda  \in \triangle_{00}$  with dim($\lambda) \geq 3.$  This means that 
   $$\lambda \in \triangle_0 \; \mbox{and} \; T_0(\lambda) \in \triangle_0.$$
   To be in $\triangle_0$ means that $ \lambda_1<\lambda_2+\lambda_m$.  
   As 
   $$T_0(\lambda) = (\lambda_2, \lambda_3, \ldots, \lambda_m, \lambda_1-\lambda_2) \times [k_1+k_2, k_3, \ldots, k_m, k_1],$$
   to have $T(\lambda) \in \triangle_0$ means that we need
   $$\lambda_2< \lambda_3+ (\lambda_1-\lambda_2)$$
   which is the same as $2\lambda_2 <\lambda_1 + \lambda_3$
   
   Now let $\lambda  \in \triangle_{00}$  with dim($\lambda) =2.$  We know that for $\lambda \in \triangle_2$ in the dimension two case that $\lambda_1<2\lambda_2.$ We have 
   $$T_0(\lambda) = (\lambda_2, , \lambda_1-\lambda_2) \times [k_1+k_2, k_1],$$
    in which case to have to have $T(\lambda) \in \triangle_0$ means that 
       $$\lambda_2<2 (\lambda_1-\lambda_2)$$
   which is the same as $3\lambda_2 <\lambda_1 + \lambda_3$

   Now turn to $\triangle_{01}.$   Let $\lambda  \in \triangle_{01} \cap  \mathcal{P}_{\geq 3} .$ This means that 
   $\lambda \in \triangle_0$  and $T_0(\lambda) \in \triangle_1.$
  Being in  $\triangle_0$ still means that $ \lambda_1<\lambda_2+\lambda_m$.  
  And we still have 
   $$T_0(\lambda) = (\lambda_2, \lambda_3, \ldots, \lambda_m, \lambda_1-\lambda_2) \times [k_1+k_2, k_3, \ldots, k_m, k_1].$$
  But now we want  $T_0(\lambda) \in \triangle_1$, which means that 
   $$\lambda_2> \lambda_3+ (\lambda_1-\lambda_2)$$
   which is the same as $2\lambda_2 >\lambda_1 + \lambda_3$.
   
   The case for $\lambda  \in \triangle_{01} \cap  \mathcal{P}_{2} $ is similar.
   
   We now look at the third cylinder set $\triangle_{10}.$ Let $\lambda  \in \triangle_{10} \cap  \mathcal{P}_{\geq 3} .$ This means that 
   $\lambda \in \triangle_1$  and $T_1(\lambda) \in \triangle_0.$
  Being in  $\triangle_1$ means that $ \lambda_1>\lambda_2+\lambda_m$.  
 Then 
   $$T_1(\lambda) = (\lambda_1-\lambda_m,  \lambda_2,, \ldots, \lambda_m) \times [k_1, k_2, \ldots, k_{m-1}, k_1+k_m].$$
To have   $T_1(\lambda) \in \triangle_0$ means that 
   $$\lambda_1-\lambda_m < \lambda_2 + \lambda_m$$
   which is the same as $\lambda_1 < \lambda_2 + 2\lambda_m,$ as claimed.
   
   The case for $\lambda  \in \triangle_{10} \cap  \mathcal{P}_{2} $ is similar.

Turn to the fourth and last cylinder set $\triangle_{11}$ of the proposition. Let $\lambda  \in \triangle_{11}\cap  \mathcal{P}_{\geq 3} .$ This means that 
   $\lambda \in \triangle_1$  and $T_1(\lambda) \in \triangle_1.$
  Being in  $\triangle_1$ means that $ \lambda_1>\lambda_2+\lambda_m$, which is certainly true if $\lambda_1>\lambda_2+2\lambda_m.$ 
 As we still have 
   $T_1(\lambda) = (\lambda_1-\lambda_m ,\lambda_2,, \ldots, \lambda_m) \times [k_1, k_2, \ldots, k_{m-1}, k_1+k_m],$
  to get    $T_1(\lambda) \in \triangle_1$ will mean that 
   $$\lambda_1-\lambda_m > \lambda_2 + \lambda_m$$
   which is the same as $\lambda_1 > \lambda_2 + 2\lambda_m,$ and we are done.

And as before, the case for $\lambda  \in \triangle_{11} \cap  \mathcal{P}_{2} $ is similar.

   \end{proof}
   The above proof was done in detail just to show how straightforward it is to define explicit cylinder sets. 
   
   Then we have 
    \begin{prop}
   \begin{eqnarray*}
   T_0( \triangle_{00} )&=& \triangle_0 \cap M_0\\
   &=& \{\lambda \in \mathcal{P} : \lambda_1<\lambda_2+\lambda_m, k_1>k_m\}\\
   T_0( \triangle_{01} )&=&  \triangle_1 \cap M_0 \\
   &=& \{\lambda \in \mathcal{P} : \lambda_1>\lambda_2+\lambda_m, k_1>k_m\}\\
   T_1( \triangle_{10} )&=& \triangle_0 \cap M_1 \\
   &=& \{\lambda \in \mathcal{P} : \lambda_1<\lambda_2+\lambda_m, k_1<k_m\}   \\
   T_1( \triangle_{11} )&=&  \triangle_1\cap M_1 \\
   &=& \{\lambda \in \mathcal{P} : \lambda_1>\lambda_2+\lambda_m, k_1<k_m\}
   \end{eqnarray*}
   \end{prop}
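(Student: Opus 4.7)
The plan is to read off each of the four equalities directly from Theorem \ref{T0 and T1 1-1} combined with the very definition of the cylinder sets, so no new calculation is really needed. The second form of each set (the description in terms of inequalities on $\lambda_i$ and $k_i$) is then just a translation of $\triangle_0, \triangle_1, M_0, M_1$ into their defining inequalities.

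First I would observe the following set-theoretic rewriting of the cylinders. By their recursive definition,
\[
\triangle_{ij} \;=\; \{\lambda \in \triangle_i : T_i(\lambda) \in \triangle_j\} \;=\; \triangle_i \cap T_i^{-1}(\triangle_j),
\]
for $i,j \in \{0,1\}$. Applying $T_i$ to both sides and using that $T_i$ is a bijection $\triangle_i \to M_i$ (Theorem \ref{T0 and T1 1-1}), the image $T_i(\triangle_{ij})$ equals $T_i(\triangle_i) \cap \triangle_j = M_i \cap \triangle_j$. This immediately gives all four formulas:
\[
T_0(\triangle_{00}) = M_0 \cap \triangle_0,\quad T_0(\triangle_{01}) = M_0 \cap \triangle_1,\quad T_1(\triangle_{10}) = M_1 \cap \triangle_0,\quad T_1(\triangle_{11}) = M_1 \cap \triangle_1.
\]
The order of the intersection is immaterial, matching the statement of the proposition.

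For the explicit inequality descriptions, I would just substitute the defining conditions of the four sets: $\triangle_0$ is the condition $\lambda_1 < \lambda_2 + \lambda_m$, $\triangle_1$ is $\lambda_1 > \lambda_2 + \lambda_m$, $M_0$ is $k_1 > k_m$, and $M_1$ is $k_1 < k_m$. Combining these pairwise gives exactly the four sets on the right-hand sides.

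There is no real obstacle here; the only minor point of care is that $T_i^{-1}$ exists as a genuine inverse on $M_i$ (so that applying $T_i$ to the set equation preserves containment in both directions), which is precisely what Theorem \ref{T0 and T1 1-1} provides, together with the remark that the bijection $T_i : \triangle_i \to M_i$ makes no distinction between $\mathcal P_2$ and $\mathcal P_{\geq 3}$ partitions, so the dimension-two boundary cases handled separately in the previous proposition automatically fall into place here.
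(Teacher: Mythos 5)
Your argument is correct: the identity $T_i(\triangle_i \cap T_i^{-1}(\triangle_j)) = T_i(\triangle_i)\cap \triangle_j = M_i \cap \triangle_j$ is exactly what is needed, and Theorem \ref{T0 and T1 1-1} supplies $T_i(\triangle_i)=M_i$. The paper itself states this proposition without proof, treating it as an immediate consequence of Theorem \ref{T0 and T1 1-1} and the definition of the cylinder sets, which is precisely the reasoning you have made explicit.
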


Putting all of this together,  gives us

\begin{theorem} \label{firstcylinder}
\begin{enumerate}
\item

Every number has as many integer partitions into  partitions with   $\lambda_1<\lambda_2+\lambda_m$ and $ 2\lambda_2 <\lambda_1 + \lambda_3$   as into partitions with $\lambda_1<\lambda_2+\lambda_m$ and $k_1>k_m,$
i.e.
  $$p_{\triangle_{00}}(n) = p_{T_0(\triangle_{00})}(n) .$$

  \item

Every number has as many integer partitions into  partitions with   $\lambda_1<\lambda_2+\lambda_m$ and $ 2\lambda_2 >\lambda_1 + \lambda_3$   as into partitions with $\lambda_1>\lambda_2+\lambda_m$ and $k_1>k_m,$
 i.e.
  $$p_{\triangle_{01}}(n) = p_{T_0(\triangle_{01})}(n) .$$

  \item

Every number has as many integer partitions into  partitions with   $      \lambda_2+\lambda_m   <\lambda_1 < \lambda_2 + 2\lambda_m      $    as into partitions with  $\lambda_1<\lambda_2+\lambda_m$ and $k_1<k_m.$
   i.e.
  $$p_{\triangle_{10}}(n) = p_{T_1(\triangle_{10})}(n) .$$

    \item

Every number has as many integer partitions into  partitions with   $      \lambda_1 > \lambda_2 + 2\lambda_m      $    as into partitions with  $\lambda_1>\lambda_2+\lambda_m$ and $k_1<k_m.$
   i.e.
  $$p_{\triangle_{11}}(n) = p_{T_1(\triangle_{11})}(n) .$$

  \end{enumerate}
  \end{theorem}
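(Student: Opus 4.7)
The plan is to combine three ingredients already in the excerpt: (i) the first proposition of the subsection, which gives intrinsic part/multiplicity descriptions of $\triangle_{00}, \triangle_{01}, \triangle_{10}, \triangle_{11}$; (ii) the second proposition, which gives the explicit descriptions of $T_0(\triangle_{0j})$ and $T_1(\triangle_{1j})$ as intersections with $\triangle_0$ or $\triangle_1$ and with $M_0$ or $M_1$; and (iii) Theorem \ref{T0 and T1 1-1} together with the size-preserving theorem $\lambda\vdash n \iff T(\lambda)\vdash n$. Once these are in hand the theorem is essentially a translation exercise: each of the four claims is just $p_S(n)=p_{T_i(S)}(n)$ where $S$ is a cylinder set and $T_i$ is a size-preserving bijection on $S$.

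First, for each $(i,j)\in\{0,1\}^2$ I would note that $\triangle_{ij}\subseteq \triangle_i$, so the restriction of $T_i$ to $\triangle_{ij}$ remains one-to-one by Theorem \ref{T0 and T1 1-1}. Since $T$ preserves $|\lambda|$, the bijection $T_i:\triangle_{ij}\to T_i(\triangle_{ij})$ restricts for every $n$ to a bijection between $\{\lambda\in\triangle_{ij}:\lambda\vdash n\}$ and $\{\mu\in T_i(\triangle_{ij}):\mu\vdash n\}$. This immediately yields the four numerical equalities $p_{\triangle_{ij}}(n)=p_{T_i(\triangle_{ij})}(n)$.

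Second, I would substitute the intrinsic descriptions from the two preceding propositions: replace $\triangle_{00}$ by the set cut out by $\lambda_1<\lambda_2+\lambda_m$ and $2\lambda_2<\lambda_1+\lambda_3$ (and the dimension-$2$ analogue), replace $T_0(\triangle_{00})$ by $\{\lambda:\lambda_1<\lambda_2+\lambda_m,\ k_1>k_m\}$, and similarly for the other three cylinders. This yields the verbal statements in parts (1)--(4) verbatim.

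The only point that needs a small amount of care is Step 2's bookkeeping for the dimension-$2$ pieces of the cylinders, since the first proposition lists the $m=2$ cases as separate clauses (with conditions like $3\lambda_2<2\lambda_1$). These collapse harmlessly into the uniform statement of the theorem because when $m=2$ we have $\lambda_m=\lambda_2$, so $\lambda_1\lessgtr \lambda_2+\lambda_m$ becomes $\lambda_1\lessgtr 2\lambda_2$ and likewise for $\lambda_1\lessgtr \lambda_2+2\lambda_m$. I expect no real obstacle beyond keeping this correspondence tidy; the core of the argument is genuinely just Theorem \ref{T0 and T1 1-1} plus size-preservation.
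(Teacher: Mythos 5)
Your proposal is correct and matches the paper's own (implicit) argument: the paper simply states the theorem as ``putting all of this together,'' meaning exactly the combination you describe of the intrinsic descriptions of the cylinder sets, the identification of their images $T_i(\triangle_{ij})=\triangle_j\cap M_i$, the injectivity of $T_0$ and $T_1$ from Theorem \ref{T0 and T1 1-1}, and size-preservation. Your extra care with the dimension-two clauses is a point the paper glosses over, but it does not change the route.
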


We can continue applying $T$ to our sets, getting more and more new partition identities.

We have

 \begin{prop}
   \begin{eqnarray*}
   T_0(T_0( \triangle_{00} ))&=& \{\lambda \in \mathcal{P}_{\geq 3} :       k_{m-1}<k_m<k_1                       \}\\
   && \cup  \{\lambda \in \mathcal{P}_{2} :       2k_2>k_1>k_2                  \}   \\
   T_1(T_0( \triangle_{01} ))&=& \{\lambda \in \mathcal{P} :           k_1<       k_m<2k_1            \}\\
   && \cup  \{\lambda \in \mathcal{P}_{2} :     k_1<k_2<2k_1              \}   \\
   T_0(T_1( \triangle_{10} ))&=& \{\lambda \in \mathcal{P} :           k_m<    k_{m-1}, k_m <k_1                   \}\\
    && \cup  \{\lambda \in \mathcal{P}_{2} :      k_1> 2k_2        \}   \\
   T_1(T_1( \triangle_{11} ))&=& \{\lambda \in \mathcal{P} :               2k_1<k_m               \}\\   \end{eqnarray*}
   \end{prop}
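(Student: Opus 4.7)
The plan is to verify each of the four equalities by a uniform template: apply the two iterated maps to a generic element, read off the pattern of multiplicities in the image, and handle the converse by applying the explicit inverses $T_0^{-1}, T_1^{-1}$ from the proof of Theorem~\ref{T0 and T1 1-1}. All the work is bookkeeping on how the multiplicity indices are permuted.

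For the first line, starting from $(\lambda_1, \ldots, \lambda_m) \times [k_1, \ldots, k_m] \in \triangle_{00}$ with $m \geq 3$, one application of $T_0$ produces the multiplicity vector $[k_1+k_2,\, k_3,\, \ldots,\, k_m,\, k_1]$, and a second application produces $[k_1+k_2+k_3,\, k_4,\, \ldots,\, k_m,\, k_1,\, k_1+k_2]$. Writing the final multiplicities as $l_1, \ldots, l_m$, we read off $l_{m-1} = k_1$, $l_m = k_1+k_2$, and $l_1 = k_1+k_2+k_3$, so $l_{m-1} < l_m < l_1$ as claimed. For the converse, given a partition whose multiplicities satisfy $l_{m-1} < l_m < l_1$, the inequality $l_m < l_1$ places it in $M_0$, so $T_0^{-1}$ applies; the new first and last multiplicities of the preimage are $l_m$ and $l_{m-1}$, so the hypothesis $l_{m-1} < l_m$ places the preimage in $M_0$ again and $T_0^{-1}$ can be applied a second time. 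Bijectivity of $T_0 : \triangle_0 \to M_0$ then guarantees the final preimage lies in $\triangle_{00}$.

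The three remaining cases follow the identical recipe with the appropriate substitution of $T_0$ and $T_1$. Short calculations yield the two-step multiplicity transformations
\begin{eqnarray*}
\triangle_{01}:\;  [k_1,\ldots,k_m] &\longmapsto & [k_1+k_2,\, k_3,\, \ldots,\, k_m,\, 2k_1+k_2], \\
\triangle_{10}:\;  [k_1,\ldots,k_m] &\longmapsto & [k_1+k_2,\, k_3,\, \ldots,\, k_{m-1},\, k_1+k_m,\, k_1], \\
\triangle_{11}:\;  [k_1,\ldots,k_m] &\longmapsto & [k_1,\, k_2,\, \ldots,\, k_{m-1},\, 2k_1+k_m],
\end{eqnarray*}
from which one reads off the inequalities $l_1 < l_m < 2l_1$; then $l_m < l_{m-1}$ together with $l_m < l_1$; and finally $2l_1 < l_m$, respectively. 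The converse in each case uses the inverses in the reverse order, and the described multiplicity conditions are precisely what is needed to ensure each intermediate preimage lies in the correct set ($M_0$ or $M_1$) so that the next inverse is defined.

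The dimension-2 subcases must be handled separately because the symbols ``$1$'' and ``$m-1$'' coincide when $m = 2$, collapsing the chain of strict inequalities. A direct computation shows, for instance, that two applications of $T_0$ send $[k_1, k_2] \in \triangle_{00} \cap \mathcal{P}_2$ to $[2k_1 + k_2,\, k_1 + k_2]$, matching the stated condition $2k_2 > k_1 > k_2$ in the new labels; the $\triangle_{01}$ and $\triangle_{10}$ two-dimensional subcases are analogous, and the $\triangle_{11}$ two-dimensional case already falls under the general formula $2k_1 < k_m$ without separate treatment. I expect no real obstacle beyond this indexing care, together with the observation that in the $\triangle_{10}$ two-dimensional situation the two separate inequalities of the $m \geq 3$ case coalesce into the strictly stronger condition $k_1 > 2k_2$ needed for the double inverse to land in $M_1$.
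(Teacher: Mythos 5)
Your proposal is correct and follows essentially the same route as the paper: apply the two maps to a generic element, track how the multiplicity vector transforms, and read off the stated inequalities, with the dimension-two subcases handled separately; your computed two-step multiplicity vectors all agree with the paper's. The only difference is that you also verify surjectivity explicitly via the inverses $T_0^{-1}, T_1^{-1}$, a step the paper leaves implicit, and that is a welcome addition rather than a divergence.
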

Thus the set $T_i(T_j(\triangle_{ij}))$ has an intrinsic definition in terms of the parts and the multiplicities and not in terms of the maps $T_0$ and $T_1.$

\begin{proof}  These are calculations.   

We start with setting $\mu = (\mu_1, \ldots , \mu_m) \times [l_1, \ldots, l_m] \in \triangle_{00}\cap  \mathcal{P}_{\geq 3} .$ Set 
$$T_0(T_0(\mu)) = \lambda =  (\lambda_1, \ldots , \lambda_m) \times [k_1, \ldots, k_m]. $$
Then we have 
\begin{eqnarray*}
T_0(T_0(\mu)) &=&T_0(    \mu_2, \mu_3, \ldots , \mu_m, \mu_1-\mu_2) \times [l_1+l_2, l_3, \ldots, l_m, l_1]                 ) \\
&=& (\mu_3, \ldots, \mu_m, \mu_1-\mu_2, \mu_2-\mu_3) \times [l_1+l_2+l_3, l_4, \ldots , l_m, l_1, l_1+l_2]    \\
&=&  (\lambda_1, \ldots , \lambda_m) \times [k_1, \ldots, k_m].
\end{eqnarray*}
We have 
$$k_{m-1} = l_1<l_1+l_2 =k_m$$
and
$$k_m = l_1+l_2 < l_1+l_2+l_3 = k_1,$$
and we have our description for $T_0(T_0( \triangle_{00} \cap  \mathcal{P}_{\geq 3}  ))$.

Now let $\mu = (\mu_1,  \mu_2) \times [l_1,  l_2] \in \triangle_{00}\cap  \mathcal{P}_{2} .$ Following along the lines of the above, set 
$$T_0(T_0(\mu)) = \lambda =  (\lambda_1, \lambda_2) \times [k_1, k_2]. $$

Then 
\begin{eqnarray*}
T_0(T_0  (\mu)) &=&T_0(    \mu_2, \mu_1-\mu_2) \times [l_1+l_2, l_1]                 ) \\
&=& (\mu_1-\mu_2,       2  \mu_2-\mu_1) \times [2l_1+l_2,    , l_1+l_2]    \\
&=&  (\lambda_1, \lambda_2) \times [k_1,  k_2].
\end{eqnarray*}
Then we have 

$$k_2 = l_1+l_2 < 2l_1+l_2=k_1 <  2l_1+2l_2=2k_2$$
as desired.

Now let $\mu = (\mu_1, \ldots , \mu_m) \times [l_1, \ldots, l_m] \in \triangle_{01} \cap  \mathcal{P}_{\geq 3} .$
We have 
\begin{eqnarray*}
T_1(T_0   (\mu)) &=&T_0(    \mu_2, \mu_3, \ldots , \mu_m, \mu_1-\mu_2) \times [l_1+l_2, l_3, \ldots, l_m, l_1]                 ) \\
&=& (2\mu_2-\mu_1, \mu_3, \ldots,  \mu_m, \mu_1-\mu_2) \times [l_1+l_2, l_3, \ldots, l_m, 2l_1+l_2 ] \\
&=&  (\lambda_1, \ldots , \lambda_m) \times [k_1, \ldots, k_m].
\end{eqnarray*}
We have 
$$k_1 = l_1+l_2<k_m = 2l_1+l_2 < 2l_1+2l_2 = 2k_1,$$
as we want.

The case when  $\mu = (\mu_1,  \mu_2) \times [l_1,  l_2] \in \triangle_{01}\cap  \mathcal{P}_{2} $ is similar.

On to the third  type of cylinder    set.  Now we let $\mu = (\mu_1, \ldots , \mu_m) \times [l_1, \ldots, l_m] \in \triangle_{10}    \cap  \mathcal{P}_{\geq 3} .$   We have 
\begin{eqnarray*}
T_0(T_1   (\mu)) &=&T_0(  \mu_1-\mu_m , \mu_2, \ldots , \mu_m) \times [l_1, l_2, , \ldots, l_{m-1}, l_1+ l_m]                 ) \\
&=&      (\mu_2, \dots, \mu_m, \mu_1-\mu_2-\mu_m) \times [l_1+l_2, l_3, \ldots, l_1+l_m, l_1    ]      \\
&=&  (\lambda_1, \ldots , \lambda_m) \times [k_1, \ldots, k_m].
\end{eqnarray*}
which gives us 
$$k_m = l_1 <l_1+l_2 =k_1, \; k_m = l_1 <l_1+l_m =k_{m-1}.$$

The case when  $\mu = (\mu_1,  \mu_2) \times [l_1,  l_2] \in \triangle_{10}\cap  \mathcal{P}_{2} $ is similar.

Now for the last condition.  
 Let $\mu = (\mu_1, \ldots , \mu_m) \times [l_1, \ldots, l_m] \in \triangle_{11}  .$  Then
\begin{eqnarray*}
T_1(T_1   (\mu)) &=&T_1( ( \mu_1-\mu_m , \mu_2, \ldots , \mu_m) \times [l_1, l_2, , \ldots, l_{m-1}, l_1+ l_m]                 ) \\
&=&     ( \mu_1-2\mu_m , \mu_2, \ldots , \mu_m) \times [l_1, l_2, , \ldots, l_{m-1}, 2l_1+ l_m]       \\
&=&  (\lambda_1, \ldots , \lambda_m) \times [k_1, \ldots, k_m].
\end{eqnarray*}
which gives us 

$$2k_1= 2l_1 < 2l_1 + l_m = k_m$$
using critically that $l_m\geq 1,$ giving us our last equality.

Thus we have 

\begin{theorem} \label{secondcylinder}
\begin{enumerate}
\item

Every number has as many integer partitions into  partitions of at least three parts with   $\lambda_1<\lambda_2+\lambda_m$ and $ 2\lambda_2 <\lambda_1 + \lambda_3$  or into partitions of exactly two parts with $2\lambda_2>\lambda_1$ and $2\lambda_1>3\lambda_2$   as into partitions  of at least three parts with $k_{m-1}<k_m<k_1$ or into partitions of exactly two parts with $2k_2>k_1>k_2.$
i.e.
  $$p_{\triangle_{00}}(n) = p_{T_0(T_0(\triangle_{00}))}(n) .$$

  \item

Every number has as many integer partitions into  partitions  of at least three parts  with   $\lambda_1<\lambda_2+\lambda_m$ and $ 2\lambda_2 >\lambda_1 + \lambda_3$  or into partitions of exactly two parts with                  
 $2\lambda_2>\lambda_1$ and $2\lambda_1<3\lambda_2$     as into partitions    with $k_1<k_m < 2k_1,$
 i.e.
  $$p_{\triangle_{01}}(n) = p_{T_1(T_0(\triangle_{01}))}(n) .$$

  \item

Every number has as many integer partitions into  partitions  of at least three parts  with   $      \lambda_2+\lambda_m   <\lambda_1 < \lambda_2 + 2\lambda_m      $   or into partitions of exactly two parts with 
$2\lambda_2<\lambda_1< 3\lambda_2$  as into partitions of at least three parts with  $k_m<k_{m-1}$ and $k_m<k_1$  or into partitions of exactly two parts with $2k_2<k_1.$
   i.e.
  $$p_{\triangle_{10}}(n) = p_{T_0(T_1(\triangle_{10})}(n) .$$

    \item

Every number has as many integer partitions into  partitions with   $      \lambda_1 > \lambda_2 + 2\lambda_m      $    as into partitions with  $2k_1<k_m.$
   i.e.
  $$p_{\triangle_{11}}(n) = p_{T(T_1(\triangle_{11})}(n) .$$

  \end{enumerate}
  \end{theorem}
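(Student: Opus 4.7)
The plan is to assemble each of the four identities from three facts already established: Theorem \ref{T0 and T1 1-1} (bijectivity of $T_0$ on $\triangle_0$ and $T_1$ on $\triangle_1$), the earlier size-preservation theorem ($\lambda \vdash n$ iff $T(\lambda) \vdash n$), and the two immediately preceding propositions, which characterize the cylinder sets $\triangle_{ij}$ intrinsically via inequalities on the parts and their iterated images $T_j(T_i(\triangle_{ij}))$ intrinsically via inequalities on the multiplicities.

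First I would note that by construction of the cylinder sets, $\lambda \in \triangle_{ij}$ forces $T_i(\lambda) \in \triangle_j$, so the composition $T_j \circ T_i$ makes sense on $\triangle_{ij}$. Since $T_i$ restricted to $\triangle_i$ is one-to-one by Theorem \ref{T0 and T1 1-1}, the composition $T_j \circ T_i : \triangle_{ij} \to T_j(T_i(\triangle_{ij}))$ is a bijection onto its image in each of the four cases $(i,j) \in \{(0,0),(0,1),(1,0),(1,1)\}$.

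Next, each application of $T_0$ or $T_1$ preserves the size of the partition, so for every $n$ this bijection restricts to a bijection between the partitions of $n$ in $\triangle_{ij}$ and the partitions of $n$ in $T_j(T_i(\triangle_{ij}))$. Counting gives $p_{\triangle_{ij}}(n) = p_{T_j(T_i(\triangle_{ij}))}(n)$ in all four cases, which is exactly what each part of the theorem asserts once we replace the abstract sets by their intrinsic descriptions.

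Finally, to land on the stated form of the theorem, I would substitute in those intrinsic descriptions: the domain side is read off from the first cylinder-set proposition (with its separate treatment of $\mathcal{P}_{\geq 3}$ and $\mathcal{P}_2$), and the image side from the proposition just proved. The real conceptual work, namely obtaining clean multiplicity-only descriptions of each $T_j(T_i(\triangle_{ij}))$, has already been carried out in that proposition's proof, so what remains here is bookkeeping. The only mild nuisance, which I expect to be the main step requiring attention, is merging the $\mathcal{P}_{\geq 3}$ and $\mathcal{P}_2$ clauses in parts 1 and 3; in parts 2 and 4 the two-part and $\geq 3$-part conditions already coincide, so the statement needs no split.
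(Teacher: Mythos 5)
Your proposal is correct and follows essentially the same route as the paper: the paper derives the theorem directly from the two cylinder-set propositions (intrinsic descriptions of $\triangle_{ij}$ and of $T_j(T_i(\triangle_{ij}))$) together with the injectivity of $T_0$ and $T_1$ and the size-preservation of $T$, exactly as you assemble it. The only work the paper presents is the computation in those propositions, with the theorem itself stated as an immediate consequence, so your ``bookkeeping'' framing matches the paper's treatment.
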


\end{proof}

Let us work out the example of $\triangle_{01}(11)$.   In the appendix we list all partitions of the number $11$.  
We see that

$$\triangle_{01}=  \{ (6,5)   \times [1,1] ,    (5,4,2)   \times [1,1,1]   ,    (4,3)   \times [2,1]     \}$$
As each partition is in $\triangle_0$, to apply the map $T$ we must apply $T_0$:
       
\begin{eqnarray*}
(6,5)   \times [1,1]        &   \xrightarrow{T_0   } &   (5,1) \times [2,1]     \\ 
  (5,4,2)   \times [1,1,1]      &   \xrightarrow{T_0   } &  (4,2,1) \times [2,1,1]      \\ 
     (4,3)   \times [2,1]      &   \xrightarrow{T_0   } &  (3,1) \times [3,2]      \\ 
       \end{eqnarray*}
Then we get 
$$T(\triangle_{01})= \{  (5,1) \times [2,1] ,  (4,2,1) \times [2,1,1] , (3,1) \times [3,2] \},$$
as desired.  Each of these partitions is in $\triangle_1$, and hence we must apply $T_1$:
\begin{eqnarray*}
   (5,1) \times [2,1]     &   \xrightarrow{T_1   } & (4,1) \times [2,3]\\
   (4,2,1) \times [2,1,1]     &   \xrightarrow{T_1   } & (3,2,1) \times [2,1,3]  \\
    (3,1) \times [3,2]      &   \xrightarrow{T_1   } & (2,1) \times [3,5]
       \end{eqnarray*}
Thus 
by calculation we have 
$$T(T(\triangle_{01})) = \{ (4,1) \times [2,3],  (3,2,1) \times [2,1,3] ,  (2,1) \times [3,5]\}.$$

 Note that a description of a cylinder set $\triangle_{(i_1, i_2, \ldots, i_d)} $ is solely in terms of the parts $n_1$ while the description of its image under $d$ iterations of $T$ is solely in terms of the multiplicities.  It strikes us that it would be enjoyable to find formulas for each of these sets in a straightforward manner.

 There are as many partition identities as there are cylinder sets.  
 
 We will just look at one example.
 
 Set 
 $$\triangle_d^G =\{   (\lambda_1, \ldots , \lambda_m) \times [k_1, \ldots, k_m]\in \mathcal{P}: \lambda_1 -\lambda_2-d\lambda_m>0>   \lambda_1 -\lambda_2-(d+1)\lambda_m \}.$$
 The $G$ stands for Gauss.  When $m=2,$ these sets are the natural domains for the traditional Gauss map for continued fractions. The set $\triangle_d^G $ is the cylinder set for $d$ ones followed by a zero and for $m=3$ are the cylinder sets in \cite{Garrity1}.

 \begin{lemma} For all $0\leq n \leq d$, the map  $T_1^n$ is a one-to-one onto function from $\triangle_d^G$ to 
 
 $$ \triangle_{d-n}^G \cap \{(\lambda_1, \ldots , \lambda_m) \times [k_1, \ldots, k_m]: nk_1<k_m\}.$$
 \end{lemma}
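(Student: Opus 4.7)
The plan is to translate the defining inequality of $\triangle_d^G$, namely $\lambda_2 + d\lambda_m < \lambda_1 < \lambda_2 + (d+1)\lambda_m$, into the statement that $T_1$ may be applied exactly $d$ times before the leading part drops at or below $\lambda_2 + \lambda_m$. The whole lemma should then reduce to an explicit closed-form expression for the iterate $T_1^n$, together with matching inequalities in both the forward and backward directions.

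First I would prove by induction on $j \in \{0,1,\ldots,n\}$ that
\[
T_1^j\bigl((\lambda_1,\ldots,\lambda_m)\times[k_1,\ldots,k_m]\bigr) = (\lambda_1 - j\lambda_m,\lambda_2,\ldots,\lambda_m)\times[k_1,k_2,\ldots,k_{m-1},jk_1+k_m].
\]
The inductive step is one application of the $T_1$ formula from Section~3, which requires only that the intermediate state lies in $\triangle_1$. But the hypothesis $\lambda_1 > \lambda_2 + d\lambda_m$ gives $\lambda_1 - j\lambda_m > \lambda_2 + (d-j)\lambda_m \geq \lambda_2 + \lambda_m$ whenever $j \leq d-1$, so every iterate through step $n \leq d$ is well-defined. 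Substituting $j = n$, the first part becomes $\lambda_1 - n\lambda_m$ satisfying $\lambda_2 + (d-n)\lambda_m < \lambda_1 - n\lambda_m < \lambda_2 + (d-n+1)\lambda_m$, placing the image in $\triangle_{d-n}^G$; the last multiplicity becomes $nk_1 + k_m > nk_1$, so the multiplicity condition in the target set is automatic.

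For the bijection statement, injectivity of $T_1^n$ is inherited directly from Theorem~\ref{T0 and T1 1-1}. For surjectivity, given $(\mu_1,\ldots,\mu_m)\times[l_1,\ldots,l_m]$ in the target, I would invert the iterated formula and propose the preimage
\[
(\mu_1 + n\mu_m,\mu_2,\ldots,\mu_m) \times [l_1,l_2,\ldots,l_{m-1},l_m - nl_1].
\]
I then need to check that $l_m - nl_1$ is a positive integer (this is exactly the hypothesis $nl_1 < l_m$) and that the new first part lies in the interval $(\mu_2 + d\mu_m,\mu_2 + (d+1)\mu_m)$, which is precisely the $\triangle_{d-n}^G$ membership of the starting partition. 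That this candidate really equals $T_1^{-n}(\mu)$ follows from checking the $M_1$ condition $l_m - jl_1 > l_1$ at each of the $n$ backward steps, which again is immediate from $l_m > nl_1$.

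The only real obstacle is bookkeeping: one must keep the two directions straight, since the forward inequalities are on the parts (driving well-definedness of $T_1^n$) while the backward inequalities are on the multiplicities (driving well-definedness of $T_1^{-n}$). Apart from that, the extremal cases $n = 0$ (the identity map, trivially) and $n = d$ (where the image enters $\triangle_0^G \subset \triangle_0$, so only the first $n-1 \leq d-1$ iterates need to lie in $\triangle_1$) deserve to be flagged, but neither introduces any new difficulty.
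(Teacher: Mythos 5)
Your proposal is correct and follows essentially the same route as the paper: derive the closed form $T_1^n(\mu)=(\mu_1-n\mu_m,\mu_2,\ldots,\mu_m)\times[l_1,\ldots,l_{m-1},nl_1+l_m]$, check the $\triangle_{d-n}^G$ inequalities and the multiplicity bound $nk_1<k_m$, and get bijectivity from injectivity of $T_1$ together with the explicit inverse. You are in fact somewhat more careful than the paper, which does not explicitly verify that the intermediate iterates remain in $\triangle_1$ or that the backward steps stay in $M_1$; both checks are worth keeping.
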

   
 \begin{proof}
 
 Similar to before, set $\mu = (\mu_1, \ldots , \mu_m) \times [l_1, \ldots, l_m] \in \triangle_d^G.$ Set 
$$T_1^n( \mu) =  (\lambda_1, \ldots , \lambda_m) \times [k_1, \ldots, k_m]. $$

We have 
\begin{eqnarray*}
T_1^n( \mu) &=& T_1^n( (\mu_1, \ldots , \mu_m) \times [l_1, \ldots, l_m] )\\
&=& (\mu_1-n\mu_m, \mu_2, \ldots, \mu_m)\times [l_1, \ldots, nl_1+ l_m] \\
&=&  (\lambda_1, \ldots , \lambda_m) \times [k_1, \ldots, k_m].
\end{eqnarray*}
We have 
\begin{eqnarray*}
\lambda_1- \lambda_2 - (d-n)\lambda_m &=& \mu_1-\mu_2 - d\mu_m \\
&>& 0\\
&>& \mu_1-\mu_2 - d\mu_m - \mu_m\\
&=& \lambda_1- \lambda_2 - (d-n+1)\lambda_m
\end{eqnarray*}
For the multiplicities, we have 
\begin{eqnarray*}
n k_1 &=& n l_1 \\
&< & n l_1 + l_m \\
&=& k_m
\end{eqnarray*}

The one-to-oneness  comes from that $T_1$ is one-to-one.  It can be checked that each of the maps has a well-defined inverse, giving us that the iterative map $T_1^n$ must also be onto.
 
 \end{proof}
 This give us in particular that the image of $ \triangle_d^G$ under $T_1^d$ is in $ \triangle_0^G= \triangle_0.$
 
 \begin{lemma}The map $T_0\circ T_1^d$ is a one-to-one onto function from $\triangle_d^G$ to 
 $$\{(\lambda_1, \ldots , \lambda_m) \times [k_1, \ldots, k_m]: dk_m<k_{m-1}, k_m < k_1\}.$$
 \end{lemma}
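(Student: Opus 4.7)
The plan is to simply chain together the previous lemma with Theorem~\ref{T0 and T1 1-1}, and then track how the multiplicity conditions transform under a single application of $T_0$.

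First, by the previous lemma applied with $n = d$, the map $T_1^d$ is a bijection from $\triangle_d^G$ onto $\triangle_0^G \cap \{(\overline{\lambda})\times[\overline{k}] : dk_1 < k_m\}$, and since $\triangle_0^G = \triangle_0$ by definition, this image sits inside the domain of $T_0$. Then by Theorem~\ref{T0 and T1 1-1}, $T_0$ is a bijection from $\triangle_0$ onto $M_0 = \{k_1 > k_m\}$. The composition $T_0 \circ T_1^d$ is therefore one-to-one automatically, so the entire content of the lemma is to identify the image set explicitly.

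For the image, I would take a generic $\nu = (\nu_1,\ldots,\nu_m) \times [j_1,\ldots,j_m] \in \triangle_0$ satisfying $dj_1 < j_m$, compute
\[
T_0(\nu) = (\nu_2,\ldots,\nu_m,\nu_1-\nu_2) \times [j_1+j_2, j_3,\ldots, j_m, j_1] = (\lambda_1,\ldots,\lambda_m)\times[k_1,\ldots,k_m],
\]
and read off (for $m \geq 3$) that $k_m = j_1$, $k_{m-1} = j_m$, and $k_1 = j_1+j_2$. The hypothesis $dj_1 < j_m$ translates immediately into $dk_m < k_{m-1}$, and the positivity $j_2 \geq 1$ gives $k_m = j_1 < j_1+j_2 = k_1$. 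This proves the image is contained in the claimed set.

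For surjectivity, given any $\lambda$ with $dk_m < k_{m-1}$ and $k_m < k_1$, I would run the argument in reverse: since $k_1 > k_m$ puts $\lambda \in M_0$, the formula for $T_0^{-1}$ from Theorem~\ref{T0 and T1 1-1} produces a well-defined preimage $\nu \in \triangle_0$ with $j_1 = k_m$ and $j_m = k_{m-1}$, so the hypothesis $dk_m < k_{m-1}$ reads as $dj_1 < j_m$. The previous lemma then guarantees that $\nu$ lies in the image of $T_1^d$ acting on $\triangle_d^G$, producing the required $\mu$. I expect the main bookkeeping obstacle to be the low-dimensional case $m = 2$, where the indices $k_{m-1}$ and $k_m$ collide with $k_1$ and $k_2$ and the formula for $T_0$ specializes; this will need to be verified separately by direct substitution into the $m = 2$ formulas, much as the earlier cylinder-set propositions do, but it is a straightforward check rather than a structural issue.
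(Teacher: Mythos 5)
Your proposal is correct and follows essentially the same route as the paper: the paper computes $T_0 \circ T_1^d(\mu) = (\mu_2, \ldots, \mu_m, \mu_1 - \mu_2 - d\mu_m) \times [l_1+l_2, l_3, \ldots, dl_1+l_m, l_1]$ in a single pass and reads off $k_m = l_1 < l_1+l_2 = k_1$ and $dk_m = dl_1 < dl_1+l_m = k_{m-1}$, which is exactly your factored computation with $j_1 = l_1$, $j_2 = l_2$, $j_m = dl_1+l_m$, and your surjectivity argument via $T_0^{-1}$ is actually more explicit than the paper's, which only asserts that inverses ``can be checked.'' The one caution concerns the $m=2$ case you defer as routine: there the composite forces $k_1 = (d+1)l_1 + l_2 > (d+1)k_2$, which for $d \geq 1$ is strictly stronger than the stated conditions $dk_2 < k_1$, $k_2 < k_1$ (e.g.\ $[k_1,k_2]=[3,2]$ with $d=1$ satisfies the stated conditions but is not in the image), so the two-part check is not merely bookkeeping --- though this wrinkle is equally passed over by the paper's own proof.
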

 
 \begin{proof} This will be similar to the previous lemma.  Set $\mu = (\mu_1, \ldots , \mu_m) \times [l_1, \ldots, l_m] \in \triangle_d^G.$ Set 
$$T_0\circ T_1^d( \mu) =  (\lambda_1, \ldots , \lambda_m) \times [k_1, \ldots, k_m]. $$
Then
\begin{eqnarray*}
T_0\circ T_1^d( \mu) &=& T_0 \circ T_1^d( (\mu_1, \ldots , \mu_m) \times [l_1, \ldots, l_m] )\\
&=&T_0( (\mu_1-n\mu_m, \mu_2, \ldots, \mu_m)\times [l_1, \ldots, dl_1+ l_m] )\\
&=& ( \mu_2, \ldots, \mu_m, \mu_1-\mu_2 - d\mu_m) \times [l_1+l_2, l_3 \ldots, dl_1+l_m, l_1]\\
&=&  (\lambda_1, \ldots , \lambda_m) \times [k_1, \ldots, k_m].
\end{eqnarray*}
which gives us our result.  

And as before, the one-to-oneness  comes from that $T_1$ and $T_0$ are one-to-one.  And it can again  be checked that each of the maps has a well-defined inverse, giving us that the iterative map $T_0 \circ T_1^d$ must also be onto.
 \end{proof}
 
 These two lemmas give us the following partition identity.

 \begin{theorem}  Every number has as many integer partitions into  partitions with  $  \lambda_1 -\lambda_2-d\lambda_m>0>   \lambda_1 -\lambda_2-(d+1)\lambda_m $  as into partitions, for any $0\leq p\leq d,$  $  \lambda_1 -\lambda_2-(d-p)\lambda_m>0>   \lambda_1 -\lambda_2-(d-p+1)\lambda_m $  and $ pk_1<k_m$, and as many partitions with $dk_m<k_{m-1}, k_m < k_1$, i.e.
  $$p_{\triangle_{d}^G}(n) = p_{T_1^p(\triangle_{d}^G)}(n)  = p_{T_0 \circ T_1^p(\triangle_{d}^G)}(n) .$$
 \end{theorem}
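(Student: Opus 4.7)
The plan is to observe that this theorem is essentially a formal corollary of the two preceding lemmas combined with the size-preserving property of the triangle map established in Theorem 3.1. The heavy combinatorial lifting has already been done; what remains is to package the bijections properly.

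First I would recall that Theorem 3.1 says that for any partition $\lambda \in \mathcal{P}_{\geq 2}$, we have $\lambda \vdash n$ if and only if $T(\lambda) \vdash n$. Since each of $T_0$ and $T_1$ is a restriction of $T$, any finite composition built from $T_0$'s and $T_1$'s preserves the size of a partition. In particular, the maps $T_1^p$ and $T_0 \circ T_1^p$ preserve size on their domains of definition. This immediately implies that any bijection induced by such a composition restricts to a bijection on partitions of any fixed size $n$, which is exactly what is needed to conclude equality of the $p_S(n)$ counts.

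Next I would invoke the first of the two preceding lemmas: for every $0 \le p \le d$, the map $T_1^p$ is a bijection from $\triangle_d^G$ onto $\triangle_{d-p}^G \cap \{(\lambda_1,\ldots,\lambda_m) \times [k_1,\ldots,k_m] : pk_1 < k_m\}$. Combined with size-preservation, this yields
\[
p_{\triangle_d^G}(n) \;=\; p_{T_1^p(\triangle_d^G)}(n)
\]
for every $n$ and every $0 \le p \le d$. Then I would invoke the second lemma, which states that $T_0 \circ T_1^d$ is a bijection from $\triangle_d^G$ onto $\{(\lambda_1,\ldots,\lambda_m) \times [k_1,\ldots,k_m] : dk_m < k_{m-1},\, k_m < k_1\}$. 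Again appealing to size-preservation gives $p_{\triangle_d^G}(n) = p_{T_0 \circ T_1^d(\triangle_d^G)}(n)$, which is the last claimed equality (taking $p = d$ in the composed map, as the relevant lemma furnishes the bijection only in that case).

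I do not expect a genuine obstacle here, since every bijection in sight was verified in the two lemmas, and the translation between bijections of sets and equalities of counting functions is routine once size-preservation is known. The one point that requires a moment of care is making explicit that the descriptions of the images given in the lemmas are indeed intrinsic (depending only on parts and multiplicities, not on the dynamics), so that the theorem's statement about ``partitions with $pk_1 < k_m$'' or ``partitions with $dk_m < k_{m-1}$ and $k_m < k_1$'' is a genuinely combinatorial condition rather than a disguised reference to the map $T$.
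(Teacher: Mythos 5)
Your proposal is correct and follows exactly the paper's route: the paper states this theorem as an immediate consequence of the two preceding lemmas, and you supply the same derivation, merely making explicit the size-preservation step (that compositions of $T_0$ and $T_1$ preserve $|\lambda|$) that converts the lemmas' bijections into equalities of counting functions. Your observation that the final equality should be read with $p=d$ in $T_0\circ T_1^p$, since the second lemma only furnishes that case, is a reasonable and correct reading of the statement.
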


  \subsection{Extending distinctness of parts via $T$}
  The map $T$ from partitions to partitions allows us to start with    any already known partition identity, apply the map $T$ and see what happens.
  
  We will do this for the set $\mathcal{D}$ of partitions with distinct parts (where all the $k_i=1$) in this subsection and then for the set $\mathcal{O}$ of partitions with all the parts are odd (where each $\lambda_i$ is an odd number) in the next subsection.
  
  We start with

  \begin{theorem}\label{distinct}   For any positive number $n$, the  number of integer partitions into  partitions with  distinct parts (with $k_i=1$ for all $i$)  is precisely equal to $1$ plus the number of partitions  into partitions all of whose parts are distinct, save for the largest part which will have multiplicity two ($k_1=2$ and $k_i=1$ for $i >1$)  plus  those partitions all of whose parts are distinct, save for the smallest part which will have multiplicity two ($k_m=2$ and $k_i=1$ for $i < m$) plus those partitions all of whose parts are distinct, save for the smallest and the largest parts, each having multiplicity two ($k_1=k_m=2$ and $k_i=1$ for $1<i<m$), plus an additional $1$ if $n$ is divisible by $3$.

  \end{theorem}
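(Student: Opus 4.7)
The plan is to partition the set $\mathcal{D}$ of distinct partitions into five pieces and apply the map $T$ piecewise, using that $T$ preserves size. Specifically I would write
\[
\mathcal{D} = \bigl(\mathcal{D} \cap \mathcal{P}_1\bigr) \sqcup \bigl(\mathcal{D} \cap \triangle_0\bigr) \sqcup \bigl(\mathcal{D} \cap \triangle_1\bigr) \sqcup \bigl(\mathcal{D} \cap \triangle_D \cap \mathcal{P}_{\geq 3}\bigr) \sqcup \bigl(\mathcal{D} \cap \triangle_D \cap \mathcal{P}_2\bigr),
\]
and identify each image as one of the classes named in the theorem. The dimension-one piece contains only $(n) \times [1]$, contributing the free-standing $+1$. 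The last piece consists of the unique (when it exists) partition $(2\lambda_2, \lambda_2) \times [1,1]$ of $3\lambda_2 = n$, which is nonempty precisely when $3 \mid n$, and $T_D$ sends it to $(\lambda_2) \times [3]$, producing the ``$+1$ if $3 \mid n$'' term.

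For the three main pieces, I would substitute $k_1 = \cdots = k_m = 1$ directly into the explicit formulas for $T_0$, $T_1$, $T_D$ and read off the multiplicity vectors of the images: $[2, 1, \ldots, 1]$ for $T_0$ on $\mathcal{D} \cap \triangle_0$, $[1, \ldots, 1, 2]$ for $T_1$ on $\mathcal{D} \cap \triangle_1$, and $[2, 1, \ldots, 1, 2]$ for $T_D$ on $\mathcal{D} \cap \triangle_D \cap \mathcal{P}_{\geq 3}$. In each case the parts remain strictly decreasing (for $T_D$ this is where the diagonal condition $\lambda_1 = \lambda_2 + \lambda_m > \lambda_2$ is used), so the images are valid partitions matching the three target classes in the theorem.

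For bijectivity on the $\triangle_0$ and $\triangle_1$ pieces I would appeal to Theorem \ref{T0 and T1 1-1}: the image multiplicity vectors $[2, 1, \ldots, 1]$ and $[1, \ldots, 1, 2]$ sit in $M_0$ and $M_1$ respectively, and the inverse formulas given there visibly recover partitions with all multiplicities equal to $1$. For $T_D$ on the diagonal piece, injectivity follows from Proposition \ref{onetoone} (the equalities $l_1 + l_2 = l_1' + l_2'$ and $l_1 + l_m = l_1' + l_m'$ force all $l_i = l_i' = 1$ once we assume distinctness), and surjectivity is shown by the explicit preimage $(\mu_1 + \mu_M, \mu_1, \ldots, \mu_M) \times [1, \ldots, 1]$ of any target $(\mu_1, \ldots, \mu_M) \times [2, 1, \ldots, 1, 2]$ with $M \geq 2$. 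The five image sets are pairwise disjoint (distinguished by image dimension, and within dimension $\geq 2$ by which of $k_1, k_m$ equal $2$ versus $1$), so summing their sizes yields the identity.

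The main obstacle, which I do not expect to be genuinely hard, is correctly handling the low-dimensional boundary cases: when $T_D$ acts on a three-part distinct diagonal partition the image has dimension $2$, so the prescription ``$k_1 = k_m = 2$ with $k_i = 1$ for $1 < i < m$'' must be read with the middle condition vacuous, giving simply $[2, 2]$; and the two-part distinct diagonal partition $(2\lambda_2, \lambda_2) \times [1,1]$ is genuinely exceptional in that it is what forces the divisibility-by-three correction into the identity.
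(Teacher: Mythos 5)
Your proposal is correct and follows essentially the same route as the paper's proof: the same five-way decomposition of $\mathcal{D}$, the same use of Theorem \ref{T0 and T1 1-1} for the $\triangle_0$ and $\triangle_1$ pieces, the same appeal to Proposition \ref{onetoone} for the diagonal piece, and the same isolation of the two-part diagonal case $(2\lambda_2,\lambda_2)\times[1,1]$ as the source of the divisibility-by-three correction. Your explicit attention to the low-dimensional boundary cases is a slight refinement of the paper's presentation, but the argument is the same.
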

   In other words, recalling that $\mathcal{D}$ is all partitions with distinct parts and setting 
   \begin{eqnarray*}
   \mathcal{E}_0 &=&\{ (\lambda_1, \ldots, \lambda_m) \times [k_1, \ldots, k_m]\in \mathcal{P}_{\geq 2}: k_1=2, k_i =1 \; \mbox{if} \; i\geq 2\}\\
    \mathcal{E}_1 &=&\{ (\lambda_1, \ldots, \lambda_m) \times [k_1, \ldots, k_m]\in \mathcal{P}_{\geq 2}: k_m=2, k_i =1 \; \mbox{if} \; i<m\}\\
     \mathcal{E}_D &=&\{ (\lambda_1, \ldots, \lambda_m) \times [k_1, \ldots, k_m]\in \mathcal{P}_{\geq 2}: k_1=k_m=2, k_i =1 \; \mbox{if} \; 1<i<m\},\\
   \end{eqnarray*}
then the theorem is stating that 
$$p_{\mathcal{D}} (n) = 1 + p_{\mathcal{E}_0 } (n) +p_{\mathcal{E}_1 } (n) +p_{\mathcal{E}_D } (n)  + \left\lfloor    (3/n)\lfloor 3/n \rfloor   \right\rfloor,$$
where the $\left\lfloor    (3/n)\lfloor 3/n \rfloor   \right\rfloor$ term is giving us the additional $1$ when $n$ is divisible by $3$
   
  \begin{proof}  
 
  We know that 
  $$\mathcal{D} = \{ (n)\times [1]: n=1,2,3,\ldots \} \cup (\mathcal{D} \cap \triangle_0) \cup  (\mathcal{D} \cap \triangle_1) \cup  (\mathcal{D} \cap \triangle_D) .$$
   The first $1$ in the desired equation  is to account for the $ \{ (n)\times [1]\} $ term.  
  
   We will first show that $T_0$ is a one-to-one onto map from  $\mathcal{D} \cap \triangle_0$    to $ \mathcal{E}_0 $. 
   Let 
   $$\lambda =(\lambda_1, \ldots, \lambda_m) \times [1\ldots, 1].$$
   Then $T_0(\lambda)$ is 
   $$(\lambda_2, \ldots, \lambda_m, \lambda_1-\lambda_2] \times[2,1, \ldots, 1]$$
   as desired.  To show ontoness, we simply have to start with an element of the form 
  $$(\lambda_1, \ldots, \lambda_m, ] \times[2,1, \ldots, 1]$$
   and apply $T_0^{-1}$, getting 
   $$(\lambda_1+\lambda_m, \lambda_1, \ldots, \lambda_{m-1}) \times [1, \ldots, 1]$$
   which is indeed in $\mathcal{D} \cap \triangle_0$ 
   
   The argument that $T_1$ is an onto map from  $\mathcal{D} \cap \triangle_1$    to $ \mathcal{E}_1 $ is similar.
   
  Both of these use that $T_0$ and $T_1$ are always one-to-one maps.
  
Now to show that   $T_D$  is a one-to-one onto map from $\mathcal{D} \cap \triangle_D \cap \mathcal{P}_{\geq 3}$  to  $\mathcal{E}_D $.  In general, $T_D$ is not one-to-one, but it will be in this case.
  
  Let us start with $\lambda =(\lambda_1, \ldots, \lambda_m) \times [1\ldots, 1] \in \mathcal{D} \cap \triangle_D.$
  Then $T_D(\lambda)$  is
  $$(\lambda_2, \ldots, \lambda_m) \times [2, 1 \ldots, 1,2].$$
  One-to-oneness follows from Proposition \ref{onetoone} , as as does ontoness.  
  
  Finally, turn to  $T_D$   acting on $\mathcal{D} \cap \triangle_D \cap \mathcal{P}_{2}$.  An element in this set has the form $(2\lambda, \lambda) \times [1,1],$ and hence only deals with partitions of numbers that are divisible by $3$.  By definition $T_D(  (2\lambda, \lambda) \times [1,1]  ) = (\lambda)\times [3].$  This is what gives us the needed additional $1$ is $n$ if divisible by $3$, and hence the final term of $\left\lfloor    (3/n)\lfloor 3/n \rfloor   \right\rfloor$.   
  
  \end{proof}

  For an example, we will see how $T$ is   a one-to-one onto map from $  \mathcal{D}(11)\cap \triangle_0$ to $\mathcal{E}_0(11)$.
  From the appendix, we have
  $$   \mathcal{D}(11)\cap \triangle_0 =  \{ (7, 4)   \times [1, 1]  ,  (6,5)   \times [1,1]  ,  ( (5,4,2)   \times [1,1,1]   \}$$
  and
  $$\mathcal{E}_0(11) = \{   (4,3) \times [2,1]  , (4,3) \times [2,1]  ,   (4,2,1) \times [2,1,1]   \}.$$
  Applying $T$:
  \begin{eqnarray*}
    (7, 4)   \times [1, 1] &   \xrightarrow{T_0   }   &    (4,3) \times [2,1]      \\
      (6,5)   \times [1,1]   &   \xrightarrow{T_0   }   &   (5,1) \times [2,1]       \\ 
           ( (5,4,2)   \times [1,1,1]   &  \xrightarrow{T_0   }    &     (4,2,1) \times [2,1,1]     \\ 
             \end{eqnarray*}
giving us our result.

  \subsection{Extending oddness  of parts  via $T$}
  We want to show

  \begin{theorem}\label{odd and T} For any positive number $n$, the  number of integer partitions into  partitions with  only odd parts  is precisely equal to  the number of odd factors of $n$ plus  the number of  partitions all of whose parts are all odd, save the smallest part which must be even and with the multiplicity of the largest part strictly greater than the multiplicity of the smallest part plus the number of partitions all of whose parts are all odd, save the  largest  part which must be even and with the multiplicity of the largest part strictly smaller than the multiplicity of the smallest part.

  \end{theorem}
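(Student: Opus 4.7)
The plan is to decompose $\mathcal{O}$ according to dimension and position with respect to the cones $\triangle_0, \triangle_1, \triangle_D$, then push each piece forward under the appropriate branch of $T$ using Theorem \ref{T0 and T1 1-1}. Concretely, I would write
\[
\mathcal{O} = \bigl(\mathcal{O}\cap \mathcal{P}_1\bigr) \cup \bigl(\mathcal{O}\cap \triangle_0\bigr) \cup \bigl(\mathcal{O}\cap \triangle_1\bigr) \cup \bigl(\mathcal{O}\cap \triangle_D\bigr),
\]
a disjoint union. Partitions in $\mathcal{O}\cap \mathcal{P}_1$ have the form $(\lambda)\times[k]$ with $\lambda$ odd and $k\lambda=n$, so $p_{\mathcal{O}\cap\mathcal{P}_1}(n)$ is precisely the number of odd divisors of $n$, which accounts for the first summand.

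Next I would observe that $\mathcal{O}\cap \triangle_D = \emptyset$: any element of $\triangle_D$ satisfies $\lambda_1 = \lambda_2+\lambda_m$ (or, in the two-part case, $\lambda_1 = 2\lambda_2$), and in either case the left-hand side would have to be both odd and the sum of two odd numbers, which is impossible. Thus the diagonal piece contributes nothing and does not appear in the identity.

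The heart of the argument is to identify the images $T_0(\mathcal{O}\cap \triangle_0)$ and $T_1(\mathcal{O}\cap \triangle_1)$ with the two sets $\mathcal{F}_0$ and $\mathcal{F}_1$ described in the statement. For $\lambda=(\lambda_1,\dots,\lambda_m)\times[k_1,\dots,k_m]\in \mathcal{O}\cap \triangle_0$, the formula
\[
T_0(\lambda) = (\lambda_2,\dots,\lambda_m,\lambda_1-\lambda_2)\times[k_1+k_2,k_3,\dots,k_m,k_1]
\]
shows that the new smallest part $\lambda_1-\lambda_2$ is odd minus odd, hence even, while the other parts remain odd, and that the new first multiplicity $k_1+k_2$ strictly exceeds the new last multiplicity $k_1$ (since $k_2>0$). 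Thus $T_0$ lands in $\mathcal{F}_0$. Symmetrically, $T_1$ sends $\mathcal{O}\cap \triangle_1$ into $\mathcal{F}_1$. Since $T_0$ and $T_1$ are already one-to-one by Theorem \ref{T0 and T1 1-1}, the remaining step is to check surjectivity.

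For ontoness I would use the explicit inverses displayed in the proof of Theorem \ref{T0 and T1 1-1}. Given any $(\lambda_1,\dots,\lambda_m)\times[k_1,\dots,k_m]\in \mathcal{F}_0$ (all parts odd save $\lambda_m$ even, $k_1>k_m$), its $T_0$-preimage is $(\lambda_1+\lambda_m,\lambda_1,\dots,\lambda_{m-1})\times[k_m,k_1-k_m,k_2,\dots,k_{m-1}]$; the parity hypothesis forces $\lambda_1+\lambda_m$ to be odd, so all parts are odd, and the condition $k_1>k_m$ is exactly what is needed for $k_1-k_m>0$, ensuring a legitimate partition in $\mathcal{O}\cap\triangle_0$. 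The parallel check for $T_1^{-1}$ on $\mathcal{F}_1$ proceeds identically. Combining these four pieces yields
\[
p_\mathcal{O}(n) = \#\{\text{odd divisors of } n\} + p_{\mathcal{F}_0}(n) + p_{\mathcal{F}_1}(n),
\]
which is the claimed identity. The only mildly delicate step is the parity bookkeeping in the ontoness check, but once one writes down $T_0^{-1}$ and $T_1^{-1}$ explicitly it is a direct verification; there is no serious obstacle.
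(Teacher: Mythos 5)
Your proposal is correct and follows essentially the same route as the paper's proof: the same four-way decomposition of $\mathcal{O}$, the same observation that $\mathcal{O}\cap\triangle_D=\emptyset$ by parity, and the same identification of $T_0(\mathcal{O}\cap\triangle_0)$ and $T_1(\mathcal{O}\cap\triangle_1)$ with $\mathcal{F}_0$ and $\mathcal{F}_1$ via the explicit inverses. If anything, your version of $T_0^{-1}$ records the multiplicity $k_1-k_m$ correctly where the paper's displayed formula contains a typo ($k_1-k_2$), so the bookkeeping in your write-up is slightly cleaner.
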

  
  In other words, using that $\mathcal{O}$ is the set of all partitions all of whose parts are odd and setting 
   \begin{eqnarray*}
   \mathcal{F}_0 &=&\{ (\lambda_1, \ldots, \lambda_m) \times [k_1, \ldots, k_m]\in \mathcal{P}_{\geq 2}: \lambda_m \;\mbox{even}, \lambda_i  \;\mbox{odd}\; \mbox{if}\; i<m, k_1>k_m\}\\
  \mathcal{F}_1 &=&\{ (\lambda_1, \ldots, \lambda_m) \times [k_1, \ldots, k_m]\in \mathcal{P}_{\geq 2}: \lambda_1 \;\mbox{even}, \lambda_i  \;\mbox{odd}\; \mbox{if}\; i>1, k_1<k_m\}\\
     \end{eqnarray*}
then the theorem is stating that 
$$p_{\mathcal{O}} (n) = (\mbox{number of odd factors of}\; n) + p_{\mathcal{F}_0 } (n) +p_{\mathcal{F}_1 } (n) .$$

  \begin{proof} By now the path is clear.  First, if $2k+1$ divides $n$, then one of the odd partitions would be 
  $(2k+1) \times [n/(2k+1)]$.  Each of these has dimension one, and hence the triangle map would not help us with this particular odd partition.  This is why we need to explicitly add in the ``number of odd factors'' of $n$.

 Next, we  split the partitions of odd parts $\mathcal{O}$  of dimension at least two into the three disjoint sets 
  $$\mathcal{O} \cap \mathcal{P}_{\geq 2}= (\mathcal{O} \cap \triangle_0) \cup  (\mathcal{O} \cap \triangle_1) \cup  (\mathcal{O} \cap \triangle_D) $$
  and find a clean description of the image of each of these sets under the map $T$.  In this case, the last set $\mathcal{O} \cap \triangle_D$ is empty, since we cannot have 
  $\lambda_1=\lambda_2+\lambda_m$ with all three of $\lambda_1, \lambda_2, \lambda_m$ being odd numbers.
  
  Let $\lambda\in \mathcal{O} \cap \triangle_0.$ Then 
  \begin{eqnarray*}
  T_0(\lambda) &=& T_0((\lambda_1, \ldots, \lambda_m) \times [k_1, \ldots, k_m]) \\
 &=& (\lambda_2, \ldots, \lambda_m, \lambda_1-\lambda_2] \times[k_1+k_2,k_3, \ldots, k_m, k_1,]
  \end{eqnarray*}
  a partition all of whose parts are odd, save the smallest, which is indeed even.  Further, the multiplicity of the largest part $k_1+k_2$, is clearly strictly greater than the multiplicity of the smallest part, $k_1$.
  We know that $T_0$ is one-to-one.  We need to show that if we start with a partition $\lambda$ all of whose parts are odd, save the smallest, that $T_0^{-1}$ is in $\mathcal{O}.$
  
  Since the multiplicity of the largest part is strictly greater than the multiplicity of the smallest part, we know that we must act on the partition by $T_0^{-1}:$
  
  \begin{eqnarray*}
  T_0^{-1}(\lambda) &=& T_0^{-1}((\lambda_1, \ldots, \lambda_m) \times [k_1, \ldots, k_m]) \\
 &=& (\lambda_1+\lambda_m, \lambda_1,\lambda_2, \ldots, \lambda_{m-1}) \times[k_m, k_1-k_2, k_2, k+3, \ldots, k_{m-1}]
  \end{eqnarray*}
  all of whose parts are odd.

  The argument that $T_1$ is a one-to-one and onto map from  $\mathcal{O} \cap \triangle_1$  to the set of all partitions all of whose parts are odd, save the  largest  part which must be even and with the multiplicity of the largest part strictly smaller than the multiplicity of the smallest part, is similar.

  \end{proof}

  Putting the two theorems from the last two subsections together, along with Euler's original identity, we have 
  
  \begin{theorem} The number of ways of partitioning any positive integer $n$ from each of the following three sets are all equal. 
  \begin{enumerate}
  \item All parts are distinct
  \item All parts are odd
  \item $\{(n) \times [1]\}$ union  the set where all parts are odd, save the smallest part which must be even and with the multiplicity of the largest part strictly greater than the multiplicity of the smallest part, union the set where  all  parts are all odd, save the  largest  part which must be even and with the multiplicity of the largest part strictly smaller than the multiplicity of the smallest part.
  \end{enumerate}
  Thus 
  \begin{eqnarray*}
  p_{\mathcal{D}} (n) &=&p_{\mathcal{O}} (n) \\
  &=&1 + p_{\mathcal{E}_0 } (n) +p_{\mathcal{E}_1 } (n) +p_{\mathcal{E}_D } (n) \\
   &=& (\mbox{number of odd factors of}\; n) + p_{\mathcal{F}_0 } (n) +p_{\mathcal{F}_1 } (n).
  \end{eqnarray*}
  
  \end{theorem}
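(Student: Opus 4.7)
The proof is obtained by chaining together three identities that have already been established. First, the equality $p_{\mathcal{D}}(n) = p_{\mathcal{O}}(n)$ is Euler's classical identity, recorded as the first theorem in Section~2. Second, the equality $p_{\mathcal{D}}(n) = 1 + p_{\mathcal{E}_0}(n) + p_{\mathcal{E}_1}(n) + p_{\mathcal{E}_D}(n)$ is precisely Theorem~\ref{distinct}. Third, the equality $p_{\mathcal{O}}(n) = (\text{number of odd factors of } n) + p_{\mathcal{F}_0}(n) + p_{\mathcal{F}_1}(n)$ is Theorem~\ref{odd and T}. The plan is therefore just to invoke these three results in sequence and observe that they pin down a common value.

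The structural picture is the same in the $\mathcal{D}$ and $\mathcal{O}$ cases. In each case one decomposes the set of partitions as the dimension-one stratum (not in the domain of $T$) disjoint union with its intersections with $\triangle_0$, $\triangle_1$, and $\triangle_D$, and then applies Theorem~\ref{T0 and T1 1-1} together with the explicit formulas for $T_0$, $T_1$, $T_D$ to identify the images. For $\mathcal{D}$ the dimension-one stratum contributes exactly the single partition $(n)\times [1]$, accounting for the $+1$, and $T_0$, $T_1$, $T_D$ supply bijections with $\mathcal{E}_0$, $\mathcal{E}_1$, $\mathcal{E}_D$. For $\mathcal{O}$ the dimension-one stratum consists of the partitions $(d)\times[n/d]$ with $d$ an odd divisor of $n$, giving the odd-factor count, and $T_0$, $T_1$ supply bijections with $\mathcal{F}_0$, $\mathcal{F}_1$; the stratum $\mathcal{O} \cap \triangle_D$ is empty because $\lambda_1 = \lambda_2 + \lambda_m$ cannot hold with all three of $\lambda_1, \lambda_2, \lambda_m$ odd.

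The main (small) obstacle is the careful bookkeeping of exactly which partitions lie outside the domain of $T$, and ensuring that these residues match the additive constants appearing in the two expansions. Once it is observed that $(n)\times[1]$ is the unique dimension-one distinct partition while the odd divisors of $n$ index the dimension-one odd partitions, both expansions are complete and exhaust their respective partition sets. Combining (i)--(iii) then yields the three-way equality in the statement, which is exactly the assertion of the theorem.
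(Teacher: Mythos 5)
Your approach is the same as the paper's: the theorem is obtained by chaining Euler's identity with Theorem \ref{distinct} and Theorem \ref{odd and T}, and your summaries of how those two ingredients are proved (splitting off the dimension-one stratum and applying $T_0$, $T_1$, $T_D$ to the intersections with $\triangle_0$, $\triangle_1$, $\triangle_D$) are accurate.

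There is, however, one point where the chaining does not go through literally, and your write-up glosses over it. Theorem \ref{distinct} as actually stated reads
$$p_{\mathcal{D}}(n) = 1 + p_{\mathcal{E}_0}(n) + p_{\mathcal{E}_1}(n) + p_{\mathcal{E}_D}(n) + (\mbox{an additional } 1 \mbox{ if } 3 \mid n),$$
whereas the second line of the present theorem omits the last term, so the second equality is \emph{not} ``precisely Theorem \ref{distinct}.'' The extra term is genuinely needed: the partitions in $\mathcal{D}\cap\triangle_D\cap\mathcal{P}_2$ are exactly $(2\lambda,\lambda)\times[1,1]$ with $3\lambda=n$, and $T_D$ sends such a partition to $(\lambda)\times[3]$, which has dimension one and therefore lies in none of $\mathcal{E}_0$, $\mathcal{E}_1$, $\mathcal{E}_D$ (all of which sit inside $\mathcal{P}_{\geq 2}$). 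Concretely, for $n=3$ one has $p_{\mathcal{D}}(3)=2$ (from $(3)\times[1]$ and $(2,1)\times[1,1]$) while $1+p_{\mathcal{E}_0}(3)+p_{\mathcal{E}_1}(3)+p_{\mathcal{E}_D}(3)=1$. So either the displayed identity in the statement must be corrected to carry the $3\mid n$ term, or your proof must explain why it may be dropped (it may not). This is an inconsistency you inherit from the paper itself, but a complete proof has to confront it rather than assert that the two formulas coincide.
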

  
  \section{ A Summary of Relevant Sets}\label{sets}
  
  We list in a table most of  the various sets that we have cared about in this paper.  For all, we use that the $\lambda_i$ make up the parts and the $k_i$ the multiplicities.
  For all $m$, we have 
  $\lambda_1> \cdots, > \lambda_m>0$ and $k_i>0$ for $i=1, \ldots, m.$

$$\begin{array}{c|c|c}
\mbox{sets}  & \mbox{dim}\;=2 &  \mbox{dim}\;\geq 3 \\
\hline
\triangle_0   &     2\lambda_2>\lambda_1        &   \lambda_2 + \lambda_m > \lambda_1         \\
\hline
\triangle_1   &      2\lambda_2 <\lambda_1        &   \lambda_2 + \lambda_m < \lambda_1        \\
\hline
\triangle_D   &      2\lambda_2 =\lambda_1        &   \lambda_2 + \lambda_m = \lambda_1        \\
\hline
\triangle_{00  }   & 2 \lambda_2>\lambda_1 , 2\lambda_1>3\lambda_2        &     \lambda_2 + \lambda_m > \lambda_1   , 2\lambda_2 <\lambda_1 + \lambda_3            \\
\hline
\triangle_{01  }   &   2\lambda_2>\lambda_1     , 2\lambda_1< 3\lambda_2      &        \lambda_2 + \lambda_m > \lambda_1 , 2\lambda_2 >\lambda_1 + \lambda_3        \\
\hline
\triangle_{10  }   &    2\lambda_2 <\lambda_1 , 3\lambda_2>\lambda_1    &    \lambda_2 + \lambda_m < \lambda_1    , \lambda_2+2\lambda_m >\lambda_1    \\
\hline
\triangle_{11  }   &    3\lambda_2 <\lambda_1    &   \lambda_2 +2 \lambda_m < \lambda_1        \\
\hline
M_0=T_0(\triangle_0)&     k_1>k_2        &      k_1>k_m         \\
\hline
M_1  = T_1(\triangle_1)  &    k_1<k_2       &    k_1<k_m      \\
\hline
T_{  0   }     ( \triangle_{00}  )  &      2\lambda_2>\lambda_1  , k_1>k_2      &      \lambda_2 + \lambda_m > \lambda_1       , k_1>k_m           \\
\hline
T_{  0   }     ( \triangle_{01}  )   &       2\lambda_2 <\lambda_1     , k_1>k_2             &     \lambda_2 + \lambda_m < \lambda_1     , k_1>k_m            \\
\hline
T_{  1   }     ( \triangle_{10}  )   &          2\lambda_2>\lambda_1 , k_1<k_2               &         \lambda_2 + \lambda_m > \lambda_1   , k_1<k_m         \\
\hline
T_{  1   }     ( \triangle_{11}  )   &          2\lambda_2<\lambda_1 , k_1<k_2             &      \lambda_2 + \lambda_m < \lambda_1      , k_1<k_m       \\
\hline

T_{  0   }  (  T_0 ( \triangle_{00}  ) )  &   2k_2>k_1>k_2     &    k_1>k_m>k_{m-1}         \\
\hline
T_1 (T_{  0   }     ( \triangle_{01}  )  ) &        2k_1>k_2>k_1          &    2k_1>k_m>k_1            \\
\hline
T_0(T_{  1   }     ( \triangle_{10}  ) )  &      2k_2<k_1              &   k_1>k_m, k_{m-1}>k_m        \\
\hline
T_1(   T_{  1   }     ( \triangle_{11}  )  ) &         2k_1<k_2         &       k_m>2k_1       \\
\hline
\mathcal{D} & k_1=k_2=1   & k_1=\ldots =k_m = 1\\
\hline
\mathcal{E}_0 & k_1=2, k_2=1   & k_1=2, k_2\ldots =k_m = 1\\
\hline
\mathcal{E}_1 & k_1=1, k_2=2   & k_1\ldots =k_{m-1}, k_m = 2\\
\hline
\mathcal{E}_D & k_1=2, k_2=2   & k_1=2, k_2\ldots =k_{m-1}, k_m = 2\\
\hline
\mathcal{O} & \lambda_1, \lambda_2\;\mbox{odd}   &  \lambda_i \;\mbox{odd}, i=1, \ldots, m\\
\hline
\mathcal{F}_0 & \lambda_1 \;\mbox{odd},  \lambda_2 \;\mbox{even} & \lambda_i \;\mbox{odd}\; i=1\ldots, m-1, \lambda_2 \; \mbox{even}\\
\hline
\mathcal{F}_1 & \lambda_1 \;\mbox{even},  \lambda_2 \;\mbox{odd} &\lambda_1 \; \mbox{even},  \lambda_i \;\mbox{odd}\; i=2\ldots, m\\
\hline
\end{array}$$

All of these sets only make sense for partitions whose dimensions are greater than or equal to two, save for the sets $\mathcal{D} $ and $\mathcal{O},$ as we have
\begin{eqnarray*}
\mathcal{D} \cap \mathcal{P}_1 &=& \{ (n) \times [1]: n=1,2,3,\ldots \} \\
\mathcal{O} \cap \mathcal{P}_1 &=& \{ (3n) \times [k]: n,k=1,2,3,\ldots, \} 
\end{eqnarray*}

\section{ On Their Generating Functions}
 
 It is straightforward to find  the generating functions for the sets defined in  Section \ref{sets}, as we will see. This will allow us to translate each of our partition identities into  identities of the corresponding generating functions.  

We will use the standard notation 
\begin{eqnarray*}
(a;q)_0&=& 1\\
(a;q)_n &=& (1-a)(1-aq) \cdots (1-aq^{n-1})  \\
(a;q)_{\infty} &=& (1-a)(1-aq)(1-aq^2 ) \cdots    
\end{eqnarray*}

  We started with the set $\mathcal{P}$ of all partitions $(\lambda_1, \ldots, \lambda_m)\times [k_1, \ldots, k_m]$ where $\lambda_1>\lambda_2 >\cdots >\lambda_m>0$, $k_i>0$ and for all $i$, $\lambda_i, k_i \in \Z.$
Then it is well known that the generating function for $p_{\mathcal{P}}(n) $ is 

$$\sum_{n=0}^{\infty}  p_{\mathcal{P}}(n)q^n  = \frac{1}{(q: q)_{\infty} } =     \prod_{m=1}^{\infty}  \frac{1}{(1-q^m)} .$$
Here we are using the convention that $p_{\mathcal{P}}(0) =1.$

We set 
$$\mathcal{P}_N = \{(\lambda_1, \ldots, \lambda_m)\times [k_1, \ldots, k_m]\in \mathcal{P}: m=N\},$$
all the partitions of dimension $N$ and let $\mathcal{P}_{\geq N}$ be the set of all partitions with dimension at least $N$. 
Then we have the generating function for partitions of dimension one being

 $$\sum_{n=0}^{\infty}  p_{\mathcal{P}_1}(n)q^n  = \sum_{n=1}^{\infty } d(n) q^n =  \sum_{m=1}^{\infty} \frac{q^m}{1-q^m},$$
where $d(n)$ is the divisor function of $n$, meaning that it is the number of divisors of $n$, including $1$ and the number $n$ itself.

More generally, we have 
$$\sum_{n=0}^{\infty}  p_{\mathcal{P}_N}(n)q^n  = \sum_{\lambda_1>\cdots >\lambda_N>0} \frac{q^{\lambda_1}}{(1-q^{\lambda_1}) }\cdots  \frac{q^{\lambda_N}}{(1-q^{\lambda_N}) }.$$

We know that 
$$\mathcal{P}(n) = \{ (m)\times [k]: mk=n\} \cup \triangle_{0}(n)  \cup \triangle_{1}(n)  \cup \triangle_{D}(n) $$
where all of these sets are disjoint from each other.

 We can calculate that 
 \begin{eqnarray*}
 \sum_{n=0}^{\infty}  p_{\triangle_0}(n)q^n  &=&   \sum_{m=2}^{\infty}    \sum_{\left(\begin{array}{c}\lambda_1>\cdots >\lambda_m>0\\ \lambda_1<  \lambda_2 + \lambda_m \end{array}\right)} \frac{q^{\lambda_1}}{(1-q^{\lambda_1}) }\cdots  \frac{q^{\lambda_m}}{(1-q^{\lambda_m}) }       \\
 \sum_{n=0}^{\infty}  p_{\triangle_1}(n)q^n  &=&          \sum_{m=2}^{\infty}    \sum_{\left(\begin{array}{c}\lambda_1>\cdots >\lambda_m>0\\ \lambda_1>  \lambda_2 + \lambda_m \end{array}\right)} \frac{q^{\lambda_1}}{(1-q^{\lambda_1}) }\cdots  \frac{q^{\lambda_m}}{(1-q^{\lambda_m}) }      \\
\sum_{n=0}^{\infty}  p_{\triangle_D}(n)q^n  &=&       \sum_{m=2}^{\infty}    \sum_{\left(\begin{array}{c}\lambda_1>\cdots >\lambda_m>0\\ \lambda_1=  \lambda_2 + \lambda_m \end{array}\right)} \frac{q^{\lambda_1}}{(1-q^{\lambda_1}) }\cdots  \frac{q^{\lambda_m}}{(1-q^{\lambda_m}) }         \\
\sum_{n=0}^{\infty}  p_{M_0}(n)q^n  &=&    \sum_{m=2}^{\infty} \sum_ {\left(\lambda_1>\cdots >\lambda_m>0\right)}  \sum_{k_1>k_m>0} q^{k_1\lambda_1} \left(\prod_{i=2}^{m-1}\frac{q^{\lambda_i }}{ 1- \lambda_i} \right) q^{k_m\lambda_m}  \\
  \sum_{n=0}^{\infty}  p_{M_1}(n)q^n  &=&  \sum_{m=2}^{\infty} \sum_ {\left(\lambda_1>\cdots >\lambda_m>0\right)}  \sum_{k_m>k_1>0} q^{k_1\lambda_1} \left(\prod_{i=2}^{m-1}\frac{q^{\lambda_i }}{ 1- \lambda_i} \right) q^{k_m\lambda_m}         \\
 \end{eqnarray*}
 
 \begin{eqnarray*}
 \sum_{n=0}^{\infty}  p_{ \triangle_{00} }(n)q^n  &=&    \sum_{m=3}^{\infty} \sum_ {\left(\begin{array}{c}   \lambda_1>\cdots >\lambda_m>0\\  \lambda_1<\lambda_2+\lambda_m\\ 2\lambda_2 <\lambda_1 + \lambda_3    \end{array}      \right)}   \left(\prod_{i=1}^{m}\frac{q^{\lambda_i }}{ 1- \lambda_i} \right) \\
 && +  \sum_ {\left(\begin{array}{c}   \lambda_1>\lambda_2>0\\  \lambda_1 < 2\lambda_2, 3\lambda_2 < 2\lambda_1  \end{array}      \right)}   \left(\frac{q^{\lambda_1 }}{ 1- \lambda_1}\cdot \frac{q^{\lambda_2 }}{ 1- \lambda_2} \right) \\
  \sum_{n=0}^{\infty}  p_{ \triangle_{01} }(n)q^n  &=&    \sum_{m=3}^{\infty} \sum_ {\left(\begin{array}{c}   \lambda_1>\cdots >\lambda_m>0\\  \lambda_1<\lambda_2+\lambda_m\\ 2\lambda_2 >\lambda_1 + \lambda_3    \end{array}      \right)}   \left(\prod_{i=1}^{m}\frac{q^{\lambda_i }}{ 1- \lambda_i} \right) \\
 && +  \sum_ {\left(\begin{array}{c}   \lambda_1>\lambda_2>0\\  \lambda_1 < 2\lambda_2, 3\lambda_2 > 2\lambda_1  \end{array}      \right)}   \left(\frac{q^{\lambda_1 }}{ 1- \lambda_1}\cdot \frac{q^{\lambda_2 }}{ 1- \lambda_2} \right) \\
  \sum_{n=0}^{\infty}  p_{ \triangle_{10} }(n)q^n  &=&    \sum_{m=3}^{\infty} \sum_ {\left(\begin{array}{c}   \lambda_1>\cdots >\lambda_m>0\\   \lambda_1>\lambda_2+\lambda_m\\ 
   \lambda_1 <\lambda_2 + 2\lambda_m    \end{array}      \right)}   \left(\prod_{i=1}^{m}\frac{q^{\lambda_i }}{ 1- \lambda_i} \right) \\
 && +  \sum_ {\left(\begin{array}{c}   \lambda_1>\lambda_2>0\\ 3\lambda_2> \lambda_1 >2\lambda_2 \end{array}      \right)}   \left(\frac{q^{\lambda_1 }}{ 1- \lambda_1}\cdot \frac{q^{\lambda_2 }}{ 1- \lambda_2} \right) \\
  \sum_{n=0}^{\infty}  p_{ \triangle_{11} }(n)q^n  &=&    \sum_{m=3}^{\infty} \sum_ {\left(\begin{array}{c}   \lambda_1>\cdots >\lambda_m>0\\   \lambda_1>\lambda_2+2\lambda_m   \end{array}      \right)}   \left(\prod_{i=1}^{m}\frac{q^{\lambda_i }}{ 1- \lambda_i} \right) \\
 && +  \sum_ {\left(\begin{array}{c}   \lambda_1>\lambda_2>0\\  \lambda_1 >3\lambda_2 \end{array}      \right)}   \left(\frac{q^{\lambda_1 }}{ 1- \lambda_1}\cdot \frac{q^{\lambda_2 }}{ 1- \lambda_2} \right)
  \end{eqnarray*}\\

\begin{eqnarray*}
 \sum_{n=0}^{\infty}  p_{T_0 \triangle_{00} }(n)q^n  &=&    \sum_{m=2}^{\infty} \sum_ {\left(\begin{array}{c}   \lambda_1>\cdots >\lambda_m>0\\  \lambda_1<\lambda_2+\lambda_m  \end{array}      \right)}   \sum_{k_1=2}^{\infty} \sum_{\left( \begin{array}{c}k_i>0, i=1, \ldots m\\  k_m<k_1  \end{array}   \right)   }      q^{k_1 \lambda_1}       \left(\prod_{i=2}^{m-1}\frac{q^{\lambda_i }}{ 1- \lambda_i} \right) q^{k_m \lambda_m}\\
   \sum_{n=0}^{\infty}  p_{ T_0\triangle_{01} }(n)q^n  &=&   \sum_{m=2}^{\infty} \sum_ {\left(\begin{array}{c}   \lambda_1>\cdots >\lambda_m>0\\  \lambda_1>\lambda_2+\lambda_m  \end{array}      \right)}   \sum_{k_1=2}^{\infty} \sum_{\left( \begin{array}{c}k_i>0, i=1, \ldots m\\  k_m<k_1  \end{array}   \right)   }      q^{k_1 \lambda_1}       \left(\prod_{i=2}^{m-1}\frac{q^{\lambda_i }}{ 1- \lambda_i} \right) q^{k_m \lambda_m}\\
    \\
   \sum_{n=0}^{\infty}  p_{ T_1\triangle_{10} }(n)q^n  &=&     \sum_{m=2}^{\infty} \sum_ {\left(\begin{array}{c}   \lambda_1>\cdots >\lambda_m>0\\  \lambda_1<\lambda_2+\lambda_m  \end{array}      \right)}   \sum_{k_m=2}^{\infty} \sum_{\left( \begin{array}{c}k_i>0, i=1, \ldots m\\  k_1<k_m  \end{array}   \right)   }      q^{k_1 \lambda_1}       \left(\prod_{i=2}^{m-1}\frac{q^{\lambda_i }}{ 1- \lambda_i} \right) q^{k_m \lambda_m}\\
        \\
      \sum_{n=0}^{\infty}  p_{ T_1\triangle_{11} }(n)q^n  &=&    \sum_{m=2}^{\infty} \sum_ {\left(\begin{array}{c}   \lambda_1>\cdots >\lambda_m>0\\  \lambda_1>\lambda_2+\lambda_m  \end{array}      \right)}   \sum_{k_m=2}^{\infty} \sum_{\left( \begin{array}{c}k_i>0, i=1, \ldots m\\  k_1<k_m  \end{array}   \right)   }      q^{k_1 \lambda_1}       \left(\prod_{i=2}^{m-1}\frac{q^{\lambda_i }}{ 1- \lambda_i} \right) q^{k_m \lambda_m}\\
       \end{eqnarray*}\\

\begin{eqnarray*}
 \sum_{n=0}^{\infty}  p_{ T_0(T_0( \triangle_{00} )) }(n)q^n  &=&  \sum_{m=3}^{\infty} \sum_{\lambda_1>\cdots >\lambda_m>0} \sum_{\left(  \begin{array}{c} k_i>0, i=1, m-1, m \\ k_{m-1}<k_m<k_1 \end{array}   \right)  } 
 q^{k_1\lambda_1} \prod_{i=1}^{m-2} \frac{q^{     \lambda_i}}{1-q^{\lambda_i}}    q^{k_{m-1}\lambda_{m-1}}q^{k_m   \lambda_m}  \\
 &&+   \sum_{\lambda_1 >\lambda_2>0} \sum_{\left(  \begin{array}{c} k_1, k_2>0, \\ k_{2}<k_1<2k_2 \end{array}   \right)  } 
 q^{k_1\lambda_1} q^{k_2\lambda_2}  \\
  \sum_{n=0}^{\infty}  p_{  T_1(T_0( \triangle_{01} ))    }(n)q^n  &=&       \sum_{m=3}^{\infty} \sum_{\lambda_1>\cdots >\lambda_m>0} \sum_{\left(  \begin{array}{c} k_1, k_m>0 \\ k_{1}<k_m<2k_1 \end{array}   \right)  } 
 q^{k_1\lambda_1} \prod_{i=1}^{m-1} \frac{q^{     \lambda_i}}{1-q^{\lambda_i}}    q^{k_m   \lambda_m}  \\
  &&+   \sum_{\lambda_1 >\lambda_2>0} \sum_{\left(  \begin{array}{c} k_1, k_2>0, \\ k_{1}<k_2<2k_1 \end{array}   \right)  } 
 q^{k_1\lambda_1} q^{k_2\lambda_2}  \\
    \sum_{n=0}^{\infty}  p_{    T_0(T_1( \triangle_{10} ))}(n)q^n  &=&         \sum_{m=3}^{\infty} \sum_{\lambda_1>\cdots >\lambda_m>0} \sum_{\left(  \begin{array}{c} k_1, k_{m-1}, k_m>0 \\ k_{m}<k_1, k_m<k_{m-1} \end{array}   \right)  } 
 q^{k_1\lambda_1} \prod_{i=1}^{m-2} \frac{q^{     \lambda_i}}{1-q^{\lambda_i}} q^{k_{m-1}   \lambda_{m-1}}    q^{k_m   \lambda_m}  \\
   &&    +   \sum_{\lambda_1 >\lambda_2>0} \sum_{\left(  \begin{array}{c} k_1, k_2>0, \\ 2k_2<k_1 \end{array}   \right)  } 
 q^{k_1\lambda_1} q^{k_2\lambda_2}  \\   
  \sum_{n=0}^{\infty}  p_{ T_1(T_1( \triangle_{11} )) }(n)q^n  &=&         \sum_{m=3}^{\infty} \sum_{\lambda_1>\cdots >\lambda_m>0} \sum_{\left(  \begin{array}{c} k_1, k_m>0 \\  2k_1<k_{m} \end{array}   \right)  } 
 q^{k_1\lambda_1} \prod_{i=1}^{m-1} \frac{q^{     \lambda_i}}{1-q^{\lambda_i}}   q^{k_m   \lambda_m}  \\     
   \end{eqnarray*}\\

\begin{eqnarray*}
\sum_{n=0}^{\infty} p_D(n) q^n &=&  \prod_{k=1}^{\infty}  (1 + q^k)\\
\sum_{n=2}^{\infty}  p_{ \mathcal{E}_0} (n) q^n &=& \sum_{m=2}^{\infty} (1+q)\cdots (1+q^{m-1}) q^{2m}\\
\sum_{n=2}^{\infty}  p_{ \mathcal{E}_1} (n) q^n &=& \sum_{k=1}^{\infty} q^{2k} \prod_{n>k}(1+q^{m})\\
\sum_{n=2}^{\infty}  p_{ \mathcal{E}_1} (n) q^n &=& \sum_{1\leq k_1<k_m} q^{2k_1} \prod_{n=k_1+1}^{k_m-1} (1+q^{n})q^{2k_m}\\
\sum_{n=0}^{\infty} p_{\mathcal{O}}(n) q^n &=& \prod_{k=0}^{\infty} \frac{1}{1-q^{2k+1}}\\
\sum_{n=2}^{\infty} p_{ \mathcal{F}_0}(n)q^n &=&\sum_{m=2}^{\infty} \sum_{k_m=1}^{\infty} \sum_{\left(  \begin{array}{c}  \lambda_1>\cdots >\lambda_m>0\\ \lambda_m \;\mbox{even} \\  \lambda_i \; \mbox{odd}\; \mbox{if}\; i<m\\ k_1>k_m\end{array} \right)}         q^{k_1\lambda_1}\left( \prod_{i=2}^{m-1} \frac{ q^{\lambda_i}}{1-q^{\lambda_i}}  \right) q^{k_m\lambda_m}  \\
\sum_{n=2}^{\infty} p_{ \mathcal{F}_0}(n)q^n &=&      \sum_{m=2}^{\infty} \sum_{k_1=1}^{\infty} \sum_{\left(  \begin{array}{c}  \lambda_1>\cdots >\lambda_m>0\\ \lambda_1 \;\mbox{even} \\  \lambda_i \; \mbox{odd}\; \mbox{if}\; i>1\\ k_1<k_m\end{array} \right)}           q^{k_1\lambda_1}\left( \prod_{i=2}^{m-1} \frac{ q^{\lambda_i}}{1-q^{\lambda_i}}  \right) q^{k_m\lambda_m}  
\end{eqnarray*}

Then the generating interpretation of  Theorem \ref{T0 and T1 1-1} and Theorem \ref{theorem 3.3 neo}  is that 

$$\sum_{n=2}^{\infty}  p_{\triangle_0} (n) q^n = \sum_{n=2}^{\infty}  p_{ \mathcal{M}_0} (n) q^n, \; \sum_{n=2}^{\infty}  p_{\triangle_1} (n) q^n = \sum_{n=2}^{\infty}  p_{ \mathcal{M}_1} (n) q^n.$$

 The generating function interpretation for Theorems \ref{firstcylinder} and   \ref{secondcylinder} is
 $$\begin{array}{ccccc}
 \sum_{n=2}^{\infty}    p_{\triangle_{00}}(n) q^n &=&  \sum_{n=2}^{\infty}    p_{T_0(\triangle_{00})}(n) q^n&=&  \sum_{n=2}^{\infty}     p_{T_0(T_0(\triangle_{00}))} (n) q^n\\
  \sum_{n=2}^{\infty}    p_{\triangle_{01}}(n) q^n &=&  \sum_{n=2}^{\infty}    p_{T_0(\triangle_{01})}(n) q^n&=&  \sum_{n=2}^{\infty}     p_{T_1(T_0(\triangle_{01}))}   (n) q^n \\
   \sum_{n=2}^{\infty}    p_{\triangle_{10}}(n) q^n &=&  \sum_{n=2}^{\infty}    p_{T_1(\triangle_{10})}(n) q^n&=&  \sum_{n=2}^{\infty}     p_{T_0(T_1(\triangle_{10}))}   (n) q^n \\
    \sum_{n=2}^{\infty}    p_{\triangle_{11}}(n) q^n &=&  \sum_{n=2}^{\infty}    p_{T_1(\triangle_{11})}(n) q^n&=&  \sum_{n=2}^{\infty}     p_{T_1(T_1(\triangle_{11}))}   (n) q^n.
 \end{array}$$
 
 Theorem \ref{distinct}  is now
 
  $$\sum_{n=0}^{\infty} p_D(n) q^n  = \sum_{n=0}^{\infty} q^n  +\sum_{n=2}^{\infty}  p_{ \mathcal{E}_0} (n) q^n +\sum_{n=2}^{\infty}  p_{ \mathcal{E}_1} (n) q^n +\sum_{n=2}^{\infty}  p_{ \mathcal{E}_D} (n) q^n + \sum_{k=1}^{\infty} q^{3k}.$$
   The $\sum_{n=0}^{\infty} q^n  $ term is reflecting all partitions of the form $(n)\times [1]$ and the $ \sum_{k=1}^{\infty} q^{3k}$ is capturing all partitions of the form $(k)\times [3].$

  Now for the generating function version of Theorem \ref{odd and T}:

  $$\sum_{n=1}^{\infty} p_{\mathcal{O}}(n) q^n = \sum_{k=0}^{\infty} \frac{ q^{2k+1}}{1-q^{2k+1}}  + \sum_{n=2}^{\infty} p_{ \mathcal{F}_0}(n)q^n + \sum_{n=2}^{\infty} p_{ \mathcal{F}_0}(n)q^n.$$
  The first term on the right is capturing all the partition with odd parts of dimension one, and hence the partitions fo the form $(2n+1) \times [k].$

\section{Conclusion}

While we find the new partition identities interesting, we find the method by which they were discovered as more important, namely the recognition that the triangle map provides a map from partitions of dimension two or greater to partitions. 

The space of partitions $\mathcal{P}_{\geq 2}$ can be naturally split into three disjoint subsets $\triangle_0$, $\triangle_1$ and $\triangle_D$.  The map $T$ is one-to-one on $\triangle_0$ and $\triangle_1$.  Hence for any subset $S$ of 
$\triangle_0$ and $\triangle_1,$  we have 
$p_S(n) =p_T(S) (n)$.  This provided a new systematic method for producing many new partition identities, namely find an ``interesting'' subset $S$ in the space of partitions.  Then study $T(S), T^2(S), T^3(S), \ldots $ and $T^{-1}(S), T^{-2}(S), T^{-3}(S), \ldots.$  The proofs of these new identities will be easy and straightforward, as we saw in the examples given in Section \ref{examples}.

There are, though,  many other multi-dimensional continued fraction algorithms.  Most seem not to be useful for partitions, as briefly discussed in section six of \cite{BDGS} for partitions in $\mathcal{P}_{3}$. 
We do not really understand why the triangle map, and a few other multi-dimensional continued fraction algorithms, can be used on partitions while most cannot.  Is there an underlying geometric or dynamical reason for the triangle map to be, let us say, {\it partition friendly}, or is it simply a coincidence.  This strikes us  a hard (if not a mathematical precise) question.

\section{Appendix}
From Mathematica, we know that all $56$ partitions of $11$ are:

$$\begin{array}{cccc}
(11)   \times [1]  & (10, 1)   \times [1, 1]  &  (9, 2)   \times [1, 1] &  (9, 1)   \times [1, 2]\\
(8,3)   \times [1,1]  & (8, 2,1)   \times [1, 1,1]  &  (8, 1)   \times [1, 3] &  (7, 4)   \times [1, 1]\\
(7,3,1)   \times [1,1,1]  & (7, 2)   \times [1, 2]  &  (7,2,1)   \times [1, 1,2] &  (7, 1)   \times [1, 4]\\
(6,5)   \times [1,1]  & (6,4,1)   \times [1, 1,1]  &  (6, 3,2)   \times [1, 1,1] &  (6,3,1)   \times [1, 1,2]\\
(6,2,1)   \times [1,2,1]  & (6,2,1)   \times [1, 1,3]  &  (6, 1)   \times [1, 5] &  (5,1)   \times [2, 1]\\
(5,4,2)   \times [1,1,1]  & (5,4,1)   \times [1,1,2]  &  (5,3)   \times [1, 2] &  (5,3,2,1)   \times [1, 1,1,1]\\
  (5,3,1)   \times [1, 1,3] & (5,2)   \times [1, 3]  &    (5,2,1)   \times [1, 2,2]      & (5,2,1)   \times [1, 1,4]    \\
     (5,1)   \times [1, 6]      &    (4,3)   \times [2,1]    &          (4,2,1)   \times [2,1,1]   & (4,1)   \times [2,3]        \\
         (4,3,1)   \times [1,2,1]      &    (4,3,2)   \times [1,1,2]      &    (4,3,2,1)   \times [1,1,1,2]   & (4,3,1)   \times [1,1,4]         \\
  (4,2,1)   \times [1,3,1]     &   (4,2,1)   \times [1,2,3]       &    (4,2,1)   \times [1,1,5]    & (4,1)   \times [1,7]      \\
       (3,2)   \times [3,1]            &    (3,1)   \times [3,2]      &     (3,2,1)   \times [2,2,1]     &    (3,2,1)   \times [2,1,3]       \\
      (3,1)   \times [2,5]                             &   (3,2)   \times [1,4]       &     (3,2,1)   \times [1,3,2]     &    (3,2,1)   \times [1,2,4]       \\
            (3,2,1)   \times [1,1,6]                               &    (3,1)   \times [1,8]      &    (2,1)   \times [5,1]      &    (2,1)   \times [4,3]       \\
               (2,1)   \times [3,5]                                    &     (2,1)   \times [2,7]     &    (2,1)   \times [1,9]      &    (1) \times [11]    \\
\end{array}$$

We encourage readers to use this list to check all the partition identities that are given in this paper.


\begin{thebibliography}{1}


\bibitem{Andrews} G. Andrews, \textit{The Theory of Integer Partitions}, Cambridge University Press, (first edition 1976), 1998.


\bibitem{Andrews-Eriksson} G. Andrews and K. Ericksson, \textit{Integer Paritions}, Cambridge University Press, 2004.

 \bibitem{Arnoux-Nogueira-93} P. Arnoux and A. Nogueira, ``Mesures de Gauss pour des algorithmes de fractions continues multidimensionnelles'', \emph{Ann. Sci. \'Ecole Norm. Sup. (4)}, \textbf{26} (1993), no. 6, 645--664.



\bibitem{GarrityT05} S. Assaf, L. Chen, T. Cheslack-Postava, B. Cooper, A. Diesl, T. Garrity, M. Lepinski and A. Schuyler,  ``Dual approach to triangle sequences: a multidimensional continued fraction algorithm, {\it Integers}, \textbf{5} (2005), no. 1, A8.

\bibitem{Berthe-Steiner-Thuswaldner}  V. Berth\'e, W. Steiner and J. Thuswaldner, ``On the second Lyapunov exponent of some multidimensional continued fraction algorithms'', {\it Math. Comp.}, \textbf{90} (2021), no. 328, 883--905.



\bibitem{BDGS} C. Bonanno, A. Del Vigna, T.  Garrity and S.  Isola, On integer partitions and continued fraction type algorithms, 2021, https://arxiv.org/abs/2109.08962

\bibitem{Bonanno- Del Vigna-Munday} C. Bonanno, A. Del Vigna and S. Munday, ``A slow triangle map with a segment of indifferent fixed points and a complete tree of rational pairs'', \emph{Monatsh. Math.}, \textbf{194} (2021), no. 1, 1--40.

\bibitem{Bonanno-Del Vigna} C. Bonanno and A. Del Vigna, ``Representation and coding of rational pairs on a Triangular tree and Diophantine approximation in $\R^2$'', {\it Acta Arith.}, \textbf{200} (2021), no. 4, pp. 389-427.

\bibitem{Fougeron-Skripchenko} C. Fougeron and A. Skripchenko, ``Simplicity of spectra for certain multidimensional continued fraction algorithms, {\it Monatsh. Math.}, \textbf{194} (2021), no. 4, 767--787.

\bibitem{Garrity1} T. Garrity, ``On periodic sequences for algebraic numbers'', \emph{J. Number Theory}, \textbf{88} (2001), no. 1, pp. 86--103.

\bibitem{Ito}  H. Ito, Self-duality of multidimensional continued fractions, https://arxiv.org/abs/2203.07887, 2022.

\bibitem{Karpenkov13}  O. Karpenkov, {\it Geometry of Continued Fractions}. Algorithms and Computation in Mathematics, 26. Springer, Heidelberg, 2013.


\bibitem{SchweigerF08} A. Messaoudi, A. Nogueira, and F. Schweiger, ``Ergodic properties of triangle partitions'', {\it Monatsh. Math.}, {\bf 157} (2009), no. 3, 283--299.

\bibitem{Schweiger4} F. Schweiger, \emph{Multidimensional Continued Fractions}. Oxford Science Publications. Oxford University Press, Oxford, 2000.





\end{thebibliography}
\end{document}